\documentclass[12]{amsart}

\usepackage{amssymb}
\usepackage{graphicx}
\usepackage{psfrag}
\usepackage{cite}

\newtheorem{theorem}{Theorem}[subsection]
\newtheorem{lemma}[theorem]{Lemma}
\newtheorem{proposition}[theorem]{Proposition}
\newtheorem{corollary}[theorem]{Corollary}
\newtheorem*{Theorem}{Theorem}

\theoremstyle{definition}
\newtheorem{definition}[theorem]{Definition}

\theoremstyle{remark}
\newtheorem{remark}[theorem]{Remark}

%\numberwithin{equation}{subsection}

%               Symbol definitions

\newcommand{\link}{\operatorname{Lk}}

\renewcommand{\star}{\operatorname{St}}

\newcommand{\bbZ}{\mathbb Z}
\newcommand{\bbQ}{\mathbb Q}
\newcommand{\bbH}{\mathbb H}

\newcommand{\bt}{\mathbf t}
\newcommand{\bd}{\mathbf d}

\newcommand{\cA}{\mathcal A}
\newcommand{\cB}{\mathcal B}
\newcommand{\cE}{\mathcal E}

\newcommand{\cL}{\mathcal L}

\newcommand{\cS}{\mathcal S}
\newcommand{\cG}{\mathcal G}
\newcommand{\cH}{\mathcal H}

\renewcommand{\bar}[1]{\overline{#1}}
\renewcommand{\tilde}[1]{\widetilde{#1}}

\begin{document}

\title{Eulerian cube complexes and reciprocity}

\author{Richard Scott}
\address{Department of Mathematics and Computer Science\\
Santa Clara University\\
Santa Clara, CA  95053}
\email{rscott@scu.edu}

\date{July 2013, revised February 2014}
\subjclass[2000]{20F55,20F10,05A15} 

% \keywords{Coxeter groups, growth series, automatic groups}

\begin{abstract}
Let $G$ be the fundamental group of a compact nonpositively curved
cube complex $Y$.  With respect to a basepoint $x$, one obtains an
integer-valued length function on $G$ by counting the number of edges
in a minimal length edge-path representing each group element.  The
{\em growth series of $G$ with respect to $x$} is then defined to be
the power series $G_x(t)=\sum_g t^{\ell(g)}$ where 
$\ell(g)$ denotes the length of $g$.  Using the fact that $G$ admits a
suitable automatic structure, $G_x(t)$ can be shown to be a rational
function.  We prove that if $Y$ is a manifold of dimension $n$, then
this rational function satisfies the reciprocity formula
$G_x(t^{-1})=(-1)^n G_x(t)$.  We prove the formula in a more general
setting, replacing the group with  the fundamental groupoid, replacing
the growth series with the characteristic series for a suitable
regular language, and only assuming $Y$ is Eulerian. 
\end{abstract}

\maketitle

\section{Introduction}

Given a group $G$ and a length function $\ell: G\mapsto\bbZ$, the {\em
  growth series} of $G$ with respect to $\ell$ is the formal power series 
\[G_{\ell}(t)=\sum_{g\in G}t^{\ell(g)}.\]
There are several instances in the literature of known reciprocity
formulas for growth series when the length function is derived from
some action of the group on a manifold or nonsingular cell complex.
In \cite{Serre}, Serre observed that for affine Coxeter groups and the
standard word length, $G(t^{-1})=\pm G(t)$.  Other examples for
Fuchsian groups were noted by Floyd and Plotnick \cite{FP}.
The formulas for affine Coxeter groups were generalized by
Charney and Davis \cite{CD} to multivariate growth series for Coxeter
groups whose associated Davis complex was an Eulerian complex, and a
geometric interpretation was later given by Dymara 
\cite{Dym} and Davis et.al. \cite{DDJO} in terms of ``weighted
$L^2$-cohomology'' and Poincar\'{e} duality.  A multivariate
noncommuative version of the formula was described by the
author \cite{Scott} for right-angled Coxeter groups with Eulerian
Davis complex, and a partial interpretation in terms of weighted
$L^2$-cohomology was given in work with Okun \cite{OS}.

Right-angled Coxeter groups fall into the more general class of groups
acting on CAT($0$) cube complexes.  This paper arose from the
observation that an extension of the author's methods in
\cite{Scott} establishes the reciprocity formula for many more groups in
this class.  In particular, reciprocity also holds for the fundamental
group of any compact, Eulerian, nonpositively curved cubical complex.
We stick to this case for the remainder of the introduction
(and most of the paper) since the argument is easier to follow.  A
generalization to orbihedra (which includes the case of right-angled
Coxeter groups) is mentioned in Section~\ref{ss:orb} at the end of the
paper.  

Let $X$ be a connected CAT($0$) cube complex and let $G$ be a group
acting freely, cellularly, and cocompactly on $X$.  Then the quotient
space $Y=X/G$ is a nonpositively curved cube complex with universal
cover $X$ and fundamental group isomorphic to $G$.  
We let $V$ denote the vertices of $X$, and $V/G$ denote the
vertices of $Y$.  Then the set of homotopy classes of paths in $Y$
that start and end at vertices in $V/G$ forms a groupoid which we
denote by $\cG$.  We let $\cG_{x,y}$ denote the morphisms in $\cG$
that start at $x$ and end at $y$.  The vertex group $G_x=\cG_{x,x}$
is precisely the fundmental group $\pi_1(Y,x)$ and, hence, isomorphic
to $G$.

The complex $X$ comes equipped with a family of {\em hyperplanes,}
obtained by extending the perpendicular bisectors of the
$1$-dimensional cubes.  We denote the set of hyperplanes in $X$ by
$\cH$, and their images in $Y$ by $\cH/G$.    For each element of
$\cG$, we define a multivariable ``length'' by counting the number of
times a representative path crosses each hyperplane in $\cH/G$.  More
precisely, we let $\bbQ((\bt))$ denote the multivariate ring of formal
Laurent series with 
indeterminates $\bt=(t_h)$ indexed by elements $h\in\cH/G$.  For any
homotopy class $[\gamma]\in\cG_{x,y}$, we choose a representative
$\gamma$ that lies in the $1$-skeleton of $Y$ (i.e., an ``edge path''
in $Y$) and we assume that it is a minimal length representative (uses
as few edges as possible).  We then define the weight of $[\gamma]$ to
be  
\[\tau([\gamma])=\prod_{h\in\cH/G}t_h^{m_h}\]
where $m_h$ counts the number of times that the edge path $\gamma$
crosses the hyperplane $h$.  It can be shown
(Section~\ref{s:growth-series}) that the weight on $[\gamma]$ is
independent of the choice of representative, so we obtain a
well-defined {\em (multivariate) growth series}
$\cG_{x,y}(\bt)\in\bbQ((\bt))$ by summing over $\cG_{x,y}$:
 \[\cG_{x,y}(\bt)=\sum_{[\gamma]\in\cG_{x,y}}\tau([\gamma]).\]
The {\em (ordinary) growth series} is the single-variable power series
$\cG_{x,y}(t)$ obtained by substituting $t$ for each indeterminate $t_{h}$ in
the multivariate growth series.  Using a result of Niblo and Reeves
\cite{NR}, it can be shown that these growth series are, in fact,
rational functions.

In order to state our theorem, we briefly recall the definition
of an Eulerian complex.  Let $K$ be a cell complex such that the link
of every (nonempty) cell is a simplicial complex.  We say that $K$ is
{\em Eulerian of dimension $n$} if every maximal cell is
$n$-dimensional and for every (nonempty) cell $\sigma$ in $K$, the
Euler characteristic of the link is equal to the Euler characteristic
of the sphere of the same dimension, i.e., 
\[\chi(\link(\sigma))=1+(-1)^{\dim\link(\sigma)}.\]
If, in addition, the Euler characteristic of $K$ is the same as the
Euler characteristic of the $n$-sphere, then $K$ is called an {\em Eulerian
$n$-sphere}.  In particular, if $K$ is an $n$-dimensional homology
manifold, then it is Eulerian, and if in addition $n$ is odd, $K$ is an Eulerian
$n$ sphere (by Poincar\'{e} duality).  The main result of this paper is
the following theorem.   

\begin{Theorem} Let $G$ be a group acting freely,
  cellularly, and cocompactly on a connected CAT($0$) cube complex $X$, and let
  $V$ be the set of vertices of $X$.  If $X/G$ is Eulerian of
  dimension $n$, then for any $x,y\in V/G$, the multivariate growth
  series $\cG_{x,y}(\bt)$ (regarded as a rational function) satisfies 
\[\cG_{x,y}(\bt^{-1})=(-1)^n\cG_{x,y}(\bt).\]
\end{Theorem}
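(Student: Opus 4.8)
The plan is to combine the Niblo--Reeves automatic structure (which, per the previous sections, already yields rationality) with a Charney--Davis-style resummation and the Euler-characteristic identities packed into the Eulerian hypothesis. First I would make the underlying regular language explicit: represent each morphism in $\cG_{x,y}$ by the edge-path obtained by traversing the unique \emph{canonical cube path} from $x$ to $y$ in $X$ and projecting to $Y$; these words form a regular language $\cL_{x,y}$, and since $\tau$ descends to homotopy classes, $\cG_{x,y}(\bt)=\sum_{w\in\cL_{x,y}}\tau(w)$ is exactly its characteristic series. The advantage of cube paths over arbitrary geodesics is that the recognizing automaton is transparent: a state records a vertex $w$ of $Y$ together with the last cube $C$ traversed, and a cube $C'$ (meeting $C$ at $w$) is an admissible successor precisely when $C'$ uses no edge-direction at $w$ lying in the closed star $\star_{\link(w)}(C)$ --- a single condition that simultaneously forbids backtracking (directions into $C$) and enforces the maximality (no direction in which $C$ could have grown) that characterizes canonical cube paths. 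Writing $M(\bt)$ for the transfer matrix of this automaton, with a step across a cube $C'$ weighted by $\prod_{h\perp C'}t_h$, one gets $\cG_{x,y}(\bt)=\mathbf e_{(x,\emptyset)}^{\,T}(I-M(\bt))^{-1}\mathbf 1_y$, where $\mathbf 1_y$ is the sum of the basis vectors over all states based at $y$.

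The crux is to resum $(I-M(\bt))^{-1}$ into a symmetric closed form $\mathbf F(\bt)^{-1}$, where $\mathbf F(\bt)$ is assembled by running inclusion--exclusion on the closed-star constraints: replacing the constraint ``$C'$ avoids $\star_{\link(w)}(C)$'' by the corresponding signed unconstrained sum over cubes telescopes the constraint away and should produce entries that are finite alternating sums over the cubes of $Y$ of the weights $\prod_{h\perp C}\frac{-t_h}{1+t_h}$. In the one-vertex case this is the single identity
\[
\frac{1}{\cG_{v,v}(\bt)}=\sum_{\sigma}\;\prod_{d\in\sigma}\frac{-t_{h(d)}}{1+t_{h(d)}},
\]
where $\sigma$ runs over all simplices of $\link(v)$ (the empty one included) and $h(d)\in\cH/G$ is the hyperplane of the oriented edge $d$ --- this mirrors the Charney--Davis computation for right-angled Coxeter groups. \emph{I expect this step to be the main obstacle}: the telescoping is clean at a single vertex, but in the multi-vertex groupoid one must track how the cube-sum gets apportioned among matrix entries, and the naive bookkeeping (indexing a cube by its pair of diagonal corners) already fails, so the correct organization of the ``last-cube'' states has to be pinned down with care (alternatively, one settles for a symmetry of $I-M(\bt)$ itself in place of a closed-form inverse).

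Granting the closed form, reciprocity is a direct computation. Under $t_h\mapsto t_h^{-1}$ each factor $a_h:=\frac{-t_h}{1+t_h}$ becomes $\frac{-1}{1+t_h}=-(1+a_h)$, so $\mathbf F(\bt^{-1})$ arises from $\mathbf F(\bt)$ by replacing every monomial $\prod_{d\in\sigma}a_{h(d)}$ with $\prod_{d\in\sigma}\bigl(-(1+a_{h(d)})\bigr)$. Expanding over faces $\rho\subseteq\sigma$ and collecting, the coefficient of $\prod_{d\in\rho}a_{h(d)}$ becomes $(-1)^{|\rho|}$ times the alternating sum $\sum(-1)^{|\rho'|}$ over all simplices $\rho'$ of $\link_{\link(w)}(\rho)$ (empty included), which equals $(-1)^{|\rho|}\bigl(1-\chi(\link_Y(C_\rho))\bigr)$ via the standard identification $\link_{\link(w)}(\rho)=\link_Y(C_\rho)$, with $C_\rho$ the cube spanned by $\rho$. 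Since $X/G$ is Eulerian of dimension $n$ and each $C_\rho$ is a nonempty cell, $\chi(\link_Y(C_\rho))=1+(-1)^{\,n-1-\dim C_\rho}$, hence $1-\chi(\link_Y(C_\rho))=(-1)^{\,n-\dim C_\rho}$ and the coefficient collapses to the \emph{constant} $(-1)^{|\rho|}(-1)^{\,n-|\rho|}=(-1)^n$. Thus $\mathbf F(\bt^{-1})=(-1)^n\mathbf F(\bt)$, and inverting (using $((-1)^nA)^{-1}=(-1)^nA^{-1}$) gives $\cG_{x,y}(\bt^{-1})=(-1)^n\cG_{x,y}(\bt)$ for all $x,y$. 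As a check, for $X=\bbR^n$, $G=\bbZ^n$ this is just $\prod_i\frac{1+t_i}{1-t_i}\mapsto(-1)^n\prod_i\frac{1+t_i}{1-t_i}$; and the hypothesis enters only through the Euler characteristics of links of cells of $Y$ --- exactly where a non-Eulerian example such as a wedge of circles destroys the symmetry.
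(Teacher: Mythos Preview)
Your route is genuinely different from the paper's, and the step you flag as ``the main obstacle'' is a real gap. You aim for a Charney--Davis-style closed form for the (matrix of) growth series --- inverting $(I-M(\bt))$ into something whose entries are local alternating sums $\sum_{\sigma}\prod_{d\in\sigma}\frac{-t_{h(d)}}{1+t_{h(d)}}$ over simplices of a single vertex link --- and then check reciprocity on that closed form via $a_h\mapsto -(1+a_h)$ and the link Euler characteristics. In the one-vertex case this is fine and classical, but your reciprocity computation genuinely needs each entry to be a sum over simplices of \emph{one} link (so that the coefficient of $\prod_{d\in\rho}a_{h(d)}$ sees $\chi(\link_Y(C_\rho))$ for a well-defined cell $C_\rho$). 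With multiple vertices the cubes of $Y$ connect different vertices, and there is no evident reason the entries of your putative $\mathbf F(\bt)$ should localize in this way; you acknowledge this but do not resolve it, and your fallback (``a symmetry of $I-M(\bt)$ itself'') is left unspecified.

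The paper avoids this entirely by working one level up, at the noncommutative characteristic series $\lambda_{x,y}$, and proving the stronger identity $\bar{\lambda}_{x,y}=(-1)^n\lambda_{x,y}^*$ before specializing. The key device is not a closed-form inverse but a factorization $Q=D_0J$ of the transition matrix, where $D_0$ is diagonal and $J$ is (after conjugation by the involution $[*]$) block-diagonal over vertices: $[*]J=\bigoplus_{x}J_x$, with $J_x$ the \emph{anti-incidence matrix} of $\link(x)$. The Eulerian hypothesis enters as the fact (from the author's earlier paper) that $J_x$ is an involution when $\link(x)$ is an Eulerian sphere, so $J^{-1}=[*]J[*]$; a short chain of matrix algebra then gives $I-\bar{Q}=-\bar{D}(I-Q_-^*)J$, and inverting yields the result. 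Thus the multi-vertex bookkeeping is absorbed into the global factorization, while the Eulerian input is applied vertex-by-vertex on the block-diagonal factor --- this is precisely the organizational idea your proposal is missing. Your fallback suggestion is, in spirit, what the paper does, but making it work requires isolating the $J$-factor.
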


The proof of the theorem actually establishes a more general formula
that holds for a certain regular language encoding the fundamental
groupoid. In Section~\ref{s:languages}, we collect and discuss
requisite facts
from formal language theory. In Section~\ref{s:cc}, we discuss groups acting on
CAT($0$) cube complexes.  In Section~\ref{s:growth-series}, we discuss growth series and
give some sample computations.  In Section~\ref{s:pf}, we give the proof of the
main theorem. 

\section{Regular languages}\label{s:languages}

Given a set $\cA$, we let $\cA^*$ denote the free monoid on $\cA$.  We
refer to $\cA$ as an {\em alphabet} and elements of $\cA^*$ as {\em
  words} over $\cA$.  Any subset $\cL\subseteq\cA^*$ is called a {\em
language} over $\cA$.  A language is called {\em regular} if it is the
language accepted by some finite state automaton.  For our purposes, a
finite state automaton will consist of a finite directed graph with vertex set
$\cS$ (the {\em state set}) and two designated subsets $\cB,\cE\subseteq
\cS$ (the {\em initial states} and the {\em accept states}, respectively).  The
directed edges (called {\em transitions}) of the graph are labeled by
elements of some alphabet $\cA$, and the labeling is further assumed
to have the property that for each $a\in\cA$ and each $i\in\cS$ there
is at most one edge labeled $a$ emanating from $i$.  Any directed path
in the graph then determines a word in $\cA^*$ by writing down (left
to right) the labels on the consecutive edges of the path. The
language {\em accepted} by the automaton is the set of all words
corresponding to paths that start at one of the initial or ``begin''
states $i\in\cB$ and end at one of the accept or ``end'' states
$i\in\cE$.  

\begin{remark} Our definition of a finite state automaton is less
  restrictive than a {\em deterministic} one (we allow more than one
  start state), but more restrictive than a {\em nondeterministic} one
  (we don't include a padding symbol).  Since deterministic and
  nondeterministic automata define the same language class (i.e.,
  regular languages), so does ours. 
\end{remark}    

\subsection{Characteristic series}\label{ss:char-series}  
Formal languages are often considered in an algebraic context by
identifying them with their characteristic series, a power series
whose terms are the words in the language.  For convenience, we shall
work in the ring $\bbQ((\cA))$ consisting of formal Laurent series
over $\bbQ$ in noncommuting indeterminates indexed by $\cA$.  To avoid
extra notation, we use the same symbols for these indeterminates as
for the elements of $\cA$, and for $a\in\cA$, we let
$a^{-1}$ denote the formal inverse in $\bbQ((\cA))$.  Given a
language $\cL$ over $\cA$, we define its {\em  characteristic series}
to be  
\[\lambda=\sum_{\alpha\in\cL}\alpha,\]
which we regard as an element of $\bbQ((\cA))$.  If $\cL$ is a regular
language, then its characteristic series $\lambda$ has the
following rational algebraic representation (see, e.g.,
\cite[Theorem~5.1]{SS}): 
\begin{equation}\label{eq:char-series}
\lambda=B(I-Q)^{-1}E=B(I+Q+Q^2+Q^3+\cdots)E
\end{equation}
where $B$ is a row vector with entries in $\bbQ$, $E$ is a column
vector with entries in $\bbQ$, and $Q$ is a square matrix whose
entries are formal sums of elements in $\cA$.  In fact, given a finite 
state automaton that accepts $\cL$, the matrices $B$, $E$, and $Q$ can
be given explicitly as follows.
\begin{itemize}
\item $Q$ is the $\cS\times\cS$ matrix 
whose $(i,j)$-entry is the sum of all $a\in\cA$ that label transitions
from $i$ to $j$.
\item $B$ is the $1\times\cS$ matrix with all zeros except for $1$s in
  the entries corresponding to initial states.
\item $E$ is the $\cS\times 1$ matrix with all zeros
except for $1$s in the entries corresponding to accept states.  
\end{itemize}
Since $Q$ is determined by the transitions in the automaton, we shall
refer to $Q$ as a {\em transition matrix} for $\lambda$.

\subsection{Reciprocity}  
If $\cL$ is a regular language, then the entries of
any transition matrix $Q$ will be a sum of elements in $\cA$.  By
replacing each such element with its reciprocal (i.e., its formal
inverse), we obtain an $\cS\times\cS$ matrix $\bar{Q}$ defined over
$\bbQ((\cA))$.  We then define the {\em reciprocal} of the
characteristic series $\lambda$ by
\[\bar{\lambda}=B(I-\bar{Q})^{-1}E,\]
provided that $I-\bar{Q}$ is invertible over $\bbQ((\cA))$  (note that
the formal expansion $I+\bar{Q}+\bar{Q}^2+\bar{Q}^3+\cdots$ is not
defined when $\bar{Q}$ has terms with negative exponents, so
invertibility of $I-\bar{Q}$ is not automatic).  It can be shown
that if the reciprocal of the characteristic series for a regular
language exists, then it is independent of the choice of automaton
(\cite[Prop.~4.1]{Scott}).  

\subsection{Specialization}\label{ss:spec}  
Given any ring homomorphism
$\phi:\bbQ((\cA))\rightarrow R$, we refer to the image of any element 
$\lambda\in\bbQ((\cA))$ as the {\em specialization of $\lambda$ (with
  respect to $\phi$)}.  The most common examples are 
specializations to commutative Laurent series rings induced by
monomial substitutions.  More precisely, given an arbitrary indexing
set $I$, we let $\bbQ_I((\bt))$ (or 
simply $\bbQ((\bt))$ if $I$ is clear from the context) denote the ring
of Laurent series in commuting indeterminates $\bt=(t_i)_{i\in I}$.
Then any assignment $a\mapsto \bt_a$ from elements in $\cA$ to
nontrivial monomials in $\bbQ((\bt))$ induces a specialization
homomorphism $\phi:\bbQ((\cA))\rightarrow\bbQ((\bt))$.  For such a specialization,
we denote the image of $\lambda\in\bbQ((\cA))$ by $\lambda(\bt)$,
suppressing the index set $I$ and the homomorphism $\phi$.  If
$\lambda$ is the characterstic series for a regular language $\cL$,
then the series $\lambda(\bt)$ is 
the power series expansion of a rational function. (This follows from
the fact that the specialization of $I-Q$ in the representation
(\ref{eq:char-series}) is invertible
over the ring of rational functions in $\bt$.)  For such a
characteristic series $\lambda$, if the reciprocal $\bar{\lambda}$
exists, then its specialization is given by
$\bar{\lambda}(\bt)=\lambda(\bt^{-1})$, where $\bt^{-1}$ denotes the
tuple $(t_i^{-1})_{i\in I}$.     

\section{Cube complexes and cube paths}\label{s:cc}

A {\em cube complex} is a piecewise-Euclidean metric cell complex
obtained by gluing Euclidean cubes together via isometries along their
faces.  A cube complex is {\em CAT($0$)} if it is simply-connected,
and the link of every vertex is a flag complex (a {\em flag} complex is
a simplicial complex such that every set of pairwise-adjacent
vertices spans a simplex).  If $X$ is CAT($0$) and $v$ is a vertex, we
let $\link(v)$ denote the link of $v$.   

\subsection{Diagonals and cube paths}
Let $X$ be a CAT($0$) cube complex.  Every cell in $X$ is an
isometrically embedded cube.   A segment that starts at one
vertex of an embedded cube and ends at the
opposite vertex will be called a {\em (directed) diagonal} in $X$.
Note that we include the set of {\em trivial diagonals}, which start
and end at the same vertex.  For a diagonal $d$, we adopt the
following notation: 
\begin{itemize}
\item $d^*$ is the oppositely oriented diagonal ($d^*=d$ iff $d$ is a
  trivial diagonal),
\item $\alpha(d)$ is the initial vertex,
\item $\omega(d)$ is the terminal vertex,
\item $C(d)$ is the cube spanned by $d$,
\item $\dim d$ or $|d|$ is the dimension of the cube $C(d)$,
\item $\sigma(d)$ is the image of $C(d)$ in $\link(\alpha(d))$,
\end{itemize} 
We define a {\em cube path} in $X$ to be a sequence
$\bd=(d_1,\ldots,d_n)$ of diagonals such that
$\omega(d_i)=\alpha(d_{i+1})$ for $i=1,\ldots,n-1$.  Note that this 
condition means that for each $i$, $\sigma(d_{i}^*)$ and $\sigma(d_{i+1})$ are both
simplices in the link of the vertex $v_i=\omega(d_i)$.  The {\em
  length} of a cube path $\bd$, which we denote by $|\bd|$ is the sum
of the dimensions of the corresponding cubes, i.e., 
\[|\bd|=|d_1|+\cdots+|d_n|.\]
A cube path is {\em reduced} if it contains no trivial diagonals.  
A reduced cube path is called {\em normal} if
$\star(\sigma(d_i^*))\cap\sigma(d_{i+1})=\emptyset$. Here
$\star(\sigma)$, the {\em star} of $\sigma$, denotes the
union of all simplices in the link that contain $\sigma$ as a face.

\begin{remark} The defining condition for normal cube paths are often
  given in terms of cubes in $X$ rather than simplices in the link.
  In this case, the requirement would be 
  $\star(C(d_i))\cap C(d_{i+1})=\{v_i\}$ (where $\star(C)$ now denotes the
  union of all {\em cubes} in $X$ that contain $C$ as a face).
\end{remark}

\begin{proposition}[Proposition 3.3 in \cite{NR}]\label{prop:unique-greedy}  
If $X$ is a connected CAT($0$) cube complex and $u$ and $v$ are any two
  vertices in $X$, then there exists a
  unique normal cube path from $u$ to
  $v$.  
\end{proposition}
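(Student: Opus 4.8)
The plan is to establish existence by a greedy construction and then prove uniqueness by induction on the length of the cube path. For existence, given vertices $u$ and $v$, I would choose the first diagonal $d_1$ to span the largest cube $C$ containing $u$ such that $C$ ``points toward'' $v$ — more precisely, the cube whose edges are exactly those hyperplanes separating $u$ from $v$ that bound a cube at $u$. One needs the combinatorial geometry of CAT($0$) cube complexes here: the set of hyperplanes separating $u$ from $v$ is finite, and among the edges at $u$ dual to such hyperplanes, the flag condition on $\link(u)$ guarantees that the maximal collection of pairwise-compatible ones spans an actual cube $C(d_1)$. Taking $\omega(d_1)$ to be the opposite vertex of that cube strictly decreases the number of separating hyperplanes, so iterating terminates and yields a cube path $\bd=(d_1,\dots,d_n)$ from $u$ to $v$. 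The normality condition $\star(\sigma(d_i^*))\cap\sigma(d_{i+1})=\emptyset$ should fall out of the maximality in the greedy choice: if some vertex of $\link(v_i)$ in $\star(\sigma(d_i^*))$ also lay in $\sigma(d_{i+1})$, the corresponding edge could have been absorbed into $C(d_i)$, contradicting maximality.

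For uniqueness, I would argue that the first diagonal of \emph{any} normal cube path from $u$ to $v$ must be the greedy one described above. The key point is a ``no backtracking across a hyperplane'' property: in a normal cube path, once a hyperplane $h$ is crossed it is never crossed again (this is where the normality condition does real work, ruling out the configuration where $\sigma(d_{i+1})$ shares a face with the star of $\sigma(d_i^*)$). Granting that, the set of hyperplanes crossed by the whole path is exactly the set separating $u$ from $v$, and the hyperplanes crossed by $d_1$ are precisely those separating $u$ from $v$ that are dual to an edge at $u$ — because any hyperplane dual to an edge of $C(d_1)$ separates $u$ from $v$, and conversely a separating hyperplane dual to an edge at $u$ cannot be crossed ``later'' without being crossed by $d_1$ first (a normal cube path crosses the hyperplanes dual to edges at $u$ only in its first diagonal, again by normality). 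Hence $C(d_1)$ is forced, $d_1$ is forced, and then I replace $u$ by $\omega(d_1)$ and apply the inductive hypothesis to the cube path $(d_2,\dots,d_n)$ from $\omega(d_1)$ to $v$.

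The main obstacle I expect is the hyperplane-combinatorics lemma underlying both halves: that in a CAT($0$) cube complex the edges at $u$ whose dual hyperplanes separate $u$ from $v$ really do span a single cube (using the flag/CAT($0$) hypothesis via Gromov's criterion, together with the fact that two hyperplanes dual to edges at a common vertex cross iff the corresponding square is present), and that normality is precisely the condition preventing a hyperplane from being recrossed. I would isolate this as a preliminary lemma. Once that is in hand, the existence argument is a termination count on separating hyperplanes and the uniqueness argument is the induction sketched above; both are then routine. Since this is quoted as Proposition~3.3 of \cite{NR}, I would in fact just cite Niblo--Reeves for the full details and include only the sketch above to keep the paper self-contained.
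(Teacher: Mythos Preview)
The paper gives no proof of this proposition at all: it is stated with the attribution ``Proposition~3.3 in \cite{NR}'' and nothing further. Your sketch is an accurate outline of the Niblo--Reeves argument (greedy construction for existence, forced first cube plus induction for uniqueness), and your closing remark---just cite \cite{NR}---is exactly what the paper does; including the sketch would make the paper more self-contained than it actually is, but that is a matter of taste rather than correctness.
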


\subsection{The fundamental groupoid induced by a $G$-action}

Let $G$ be a group acting freely, cellularly, and cocompactly on a
connected CAT($0$) cube complex $X$.  Then $X$ is the universal cover
of $X/G$ and we let $p:X\rightarrow X/G$ denote the projection map.
Let $V$ denote the vertex set of $X$.  Since $G$ acts on $V$, we can
define a groupoid $\cG$ whose objects are the orbits $V/G$ and whose
morphisms are homotopy classes of paths in $X/G$ that start and end in
$V/G$.  In other words, $\cG$ is the subgroupoid of the fundamental
groupoid $\pi_1(X/G)$ obtained by restricting the objects to the
subset $V/G\subseteq X/G$. Given  $x,y\in V/G$, we let $\cG_{x,y}$
denote the set of morphisms from $x$ to $y$, and we let $\cG_x$ denote
the vertex group $\cG_{x,x}$.  Since the action on $X$ is free,
$\cG_x$ coincides with the fundamental group $\pi_1(X/G,x)$, which is
in turn isomorphic to $G$. 

Given a cube $C$ or diagonal $d$ in $X$, we let $\bar{C}$ and 
$\bar{d}$ denote the respective projections in $X/G$.  We call
$\bar{C}$ (resp., $\bar{d}$) a {\em cube in $X/G$} (resp., {\em
  diagonal in $X/G$}).  Any diagonal $\bar{d}$ has a well-defined
initial and final vertex in $V/G$ defined by
$\alpha(\bar{d})=\bar{\alpha(d)}$ and
$\omega(\bar{d})=\bar{\omega(d)}$.  Thus, we can define a {\em cube
  path in $X/G$} to be any sequence of diagonals
$\bar{\bd}=(\bar{d}_1,\bar{d}_2,\ldots,\bar{d}_n)$ satisfying
$\omega(\bar{d}_i)=\alpha(\bar{d}_{i+1})$.   Given a lift $v$ for the
initial vertex $\alpha(\bar{d}_1)$, any such path has a unique lift to
a cube path in $X$ starting at $v$.  Thus, cube paths in $X/G$
correspond to $G$-orbits of cube paths in $X$.   A cube path in $X/G$
will be called {\em normal} if any (hence every) lift is a normal
cube path in $X$.

Given $x,y\in V/G$ and respective lifts $\tilde{x},\tilde{y}\in X$,
any (continuous) path $\gamma$ from $x$ to $y$ in $X/G$ has a unique lift
to a path $\tilde{\gamma}$ from $\tilde{x}$ to $g\tilde{y}$ for some 
(unique) $g\in G$.  Since $X$ is the universal cover for $X/G$, any path
homotopic to $\tilde{\gamma}$ in $X$ projects to a path homotopic to
$\gamma$ in $X/G$.  Since any path in $X$ with endpoints in $V$ is
homotopic to a cube path, any element of the groupoid $\cG$ is
represented by a cube path in $X/G$.  Moreover, since there is a
unique normal representative for any such cube path (by
Proposition~\ref{prop:unique-greedy}), the elements (morphisms) of
$\cG$ correspond bijectively to normal cube paths in $X/G$ or,
equivalently, to $G$-orbits of normal cube paths in $X$.

\subsection{Automata for the groupoid}
Let $\cA$ denote the set of all nontrivial diagonals in
$X/G$.  Given any cube path in $X/G$, we obtain a word in $\cA^*$ by
reading off the nontrivial diagonals in the cube path.
We let $\cL\subseteq\cA^*$ denote the set of words corresponding to
normal cube paths.  By the previous paragraph, we
have a bijections $\cL\rightarrow \cG$, hence
$\cL$ defines a normal form for the groupoid $\cG$.   

\begin{proposition}[Propositions 5.1 and 5.2 of \cite{NR}]
The normal form $\cL$ provides a biautomatic structure  for $\cG$ (in
the sense of \cite[Chapter~11]{epstein}).  In particular, $\cL$ is a
regular language. 
\end{proposition}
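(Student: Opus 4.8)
The plan is to verify the two defining ingredients of a biautomatic structure: that $\cL$ is a regular language, and that its normal forms enjoy the two-sided fellow traveler property. Uniqueness of the normal form --- that the tautological map $\cL\to\cG$ is a bijection --- is already furnished by Proposition~\ref{prop:unique-greedy} together with the discussion preceding the statement, so nothing further is needed on that point.

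\emph{Regularity.} I would write down an explicit finite state automaton. Since $X/G$ is compact it has only finitely many cubes, hence finitely many diagonals, so the alphabet $\cA$ is finite. Take the state set to be $\cA$ together with one auxiliary state $\ast_x$ for each object $x\in V/G$ (to carry the identity morphisms); take the initial states to be the $\ast_x$ and every state to be an accept state; and install a transition labeled $\bar e$ from $\bar d$ to $\bar e$ exactly when $\omega(\bar d)=\alpha(\bar e)$ and the pair $(\bar d,\bar e)$ is a normal cube path (i.e. $\star(\sigma(d^*))\cap\sigma(e)=\emptyset$ in $\link(\omega(\bar d))$ for lifts $d,e$), together with a transition labeled $\bar e$ from $\ast_x$ to $\bar e$ whenever $\alpha(\bar e)=x$. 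Because a transition is determined by its label this automaton is deterministic in the required sense. The crucial observation is that ``reduced'' is built into the alphabet --- it contains no trivial diagonals --- while ``normal'' is a condition on \emph{consecutive pairs} of diagonals only; hence a word is accepted precisely when it reads off a normal cube path in $X/G$, so the accepted language is exactly $\cL$, and $\cL$ is regular.

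\emph{Fellow traveler property.} This is where the CAT($0$) cube geometry does the work, and where I expect the genuine difficulty to lie. I would first record the standard structural facts about normal cube paths in $X$: (i) the length $|\bd|$ of the normal cube path $\bd$ from $u$ to $v$ equals the number of hyperplanes of $\cH$ separating $u$ from $v$, which is the $\ell^1$-distance $d(u,v)$ in $X^{(1)}$; and (ii) $\bd$ is produced \emph{greedily} --- its first diagonal $d_1$ spans the unique maximal cube $C$ at $u$ that ``points toward $v$'' (meaning $d(u,v)=\dim C + d(v',v)$, where $v'$ is the vertex of $C$ opposite $u$), existence and uniqueness of that maximal cube resting on the flag condition on $\link(u)$, and the remaining diagonals being obtained by recursion from $\omega(d_1)=v'$. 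In the biautomatic structure a generator is a single diagonal $a\in\cA$, so passing from the normal form of a morphism to the normal form of its product with $a$ on the right (or on the left) moves, in a lift, the terminal (resp. initial) vertex to one at $\ell^1$-distance at most $\dim X$, hence changes the set of separating hyperplanes by at most $\dim X$ of them. I would then show, inductively along the greedy construction and using convexity of hyperplane carriers together with the median structure on $X^{(1)}$, that such a bounded change of endpoint can only ``delay'' the greedy normal cube path by a bounded amount: at each stage the two greedy cubes differ only through edges dual to this bounded symmetric difference of hyperplane sets, which forces the two cube paths to resynchronize after a number of steps bounded solely in terms of $\dim X$. Distilling from this an explicit fellow-traveler constant $k=k(\dim X)$ --- the only place quantitative control is really required --- is the technical heart of the argument; with it in hand the remaining axioms of \cite[Chapter~11]{epstein} are formal, and regularity of $\cL$ is the ``in particular'' clause.
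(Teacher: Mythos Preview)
The paper does not supply its own proof of this proposition; it is stated purely as a citation of Propositions~5.1 and~5.2 of Niblo--Reeves~\cite{NR}, and the paper proceeds to use it as a black box. There is therefore nothing in the paper to compare your argument against.

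That said, your sketch is a reasonable outline of the Niblo--Reeves argument. The regularity half is essentially what the paper itself records in the paragraphs immediately \emph{after} the proposition (where the automaton is written out explicitly, albeit with the opposite labeling convention---transitions labeled by their initial state rather than their final state; see the remarks following the proposition). Your observation that normality is a condition on consecutive pairs, hence local and recognizable by a finite automaton, is exactly the point. The fellow-traveler half is where the real content of~\cite{NR} lies; your sketch names the right ingredients (greedy construction, hyperplane separation, bounded symmetric difference of separating hyperplanes under a one-generator perturbation), but as you yourself acknowledge, extracting the explicit constant $k(\dim X)$ is the technical heart and is not carried out here. If you were writing this up independently you would need to fill that in; for the purposes of this paper, the citation to~\cite{NR} suffices.
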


Given $x,y\in V/G$, we let $\cL_{x,y}\subseteq \cL$ denote the
sublanguages corresponding to $\cG_{x,y}$.  A non-deterministic
automaton for $\cL$ is given in \cite{NR} using $\cA$ as both the
alphabet and the state set.  Since we will be interested in the
characteristic series for the sublanguages $\cL_{x,y}$, we modify the
automaton in \cite{NR} by enlarging the state set to $\cS=\cA\cup
(V/G)$.  Thus, elements of $\cS$ are precisely the diagonals (both
nontrivial and trivial) in $X/G$.  Before defining the transitions, we
first note that because the $G$-action is free, we  have the following:
\begin{itemize}
\item the map $j\mapsto j^*:\cS\rightarrow\cS$ given by
  $j^*=\bar{d^*}$ where $d$ is any lift of $j$ is a well-defined
  involution on $\cS$, and 
\item for any vertex $v\in V$, the projection
  $\sigma\mapsto\bar{\sigma}$ defines an isomorphism from the link of
  $v$ in $X$ to the link of $\bar{v}$ in $X/G$.  For any diagonal $d$
  with initial vertex $v$, we let $\sigma(\bar{d})$ denote the image
  of $\sigma(d)$ in $\link(\bar{v})$.
\end{itemize}
The transitions for our automaton are then defined as follows.  Given
  (nontrivial) states 
$i,j\in\cA$, there is a transition from $i$ to 
$j$ labeled by $i$ whenever $\omega(i)=\alpha(j)$ and
$\star(\sigma(i^*))\cap\sigma(j)=\emptyset$.  We also have transitions
from states in $\cA$ to states in $V/G$.  Namely, for each $i\in\cA$,
we add a transition from $i$ to $y\in V/G$ labeled $i$ whenever
$\omega(i)=y$.  Since the condition defining transitions is precisely
the condition defining normal cube paths, we obtain an automaton that
accepts $\cL_{x,y}$ by taking the initial states to be
$\cB_{\alpha}=\{i\;|\; \alpha(i)=x\}$ and the accept states to be the
singleton set $\cE=\{y\}$. 

\begin{remark}\label{rem:alt-auto}
The automaton above has transitions corresponding to pairs of composable
diagonals such that the final vertex of the first diagonal is the
initial vertex of the second.  One obtains a different automaton
(accepting the same language) by defining transitions for composable
diagonals when the initial vertex of the first diagonal is the
final vertex of the second.  More precisely, given $i,j\in\cS$, there
is a transition from $i$ to $j$ labeled $i^*$ whenever
$\alpha(i)=\omega(j)$ and $\star(\sigma(i))\cap\sigma(j^*)=\emptyset$.
Since accept states will now correspond to initial vertices of
diagonals, we have additional transitions from $i\in\cA$
to $y\in V/G$ labeled $i^*$ whenever $\alpha(i)=y$.  The language $\cL_{x,y}$ is
then accepted by the automaton with initial states
$\cB_{\omega}=\{i\;|\;\omega(i)=x\}$, and with accept states the singleton
$\cE=\{y\}$.
\end{remark}

\begin{remark}
The nondeterministic automaton for the full language $\cL$ described
in \cite{NR} has states corresponding only to the non-vertex diagonals
$\cA$, and every state is both an initial state and an accept state.
The transitions (directed edges in the automaton) are the same as
ours, but are labeled by the final state rather than the initial
state.  That is, the transition from $i$ to $j$ is labeled by $j$
rather than by $i$.  There is no {\em a 
  priori} reason to prefer one convention over the other; we have
chosen ours different from \cite{NR} only because it matches the
setup in \cite{Scott}, and our proofs are based on those results.
On the other hand, the fact that our automaton uses more states than
just those in $\cA$ is worth justifying.  The key issue is that with
only nontrivial diagonals as states, the map from paths in the
automaton to words in $\cL$ would not be injective.  For example, if
the transitions are labeled by their final states, as in \cite{NR}, then 
{\em every} edge in the automaton that ends at the state $j$ will
correspond to the same word in $\cL$, namely $j$.  Likewise, if
transitions are labeled by their initial states, then every edge in
the automaton that starts at $i$ will correspond to the same word $i$.
Adding states corresponding to vertices in $X/G$, with the appropriate
transitions, fixes the non-injectivity problem. 
\end{remark}

\subsection{Characteristic series for the groupoid}\label{ss:gcs}

Let $\lambda_{x,y}$ denote the characteristic series for
$\cL_{x,y}$.  Then by discussion in \ref{ss:char-series} and the
definition of the automaton above, we have the rational
representation:  
\begin{equation}\label{eq:lambda+}
\lambda_{x,y} = B_{\alpha}(I-Q_+)^{-1}E 
\end{equation}
where $B_{\alpha}$ is the $1\times\cS$ row vector with all zeros except $1$s
in the entries corresponding to the diagonals with initial vertex $x$,
$E$ is the $\cS\times 1$ column vector with all zeros except for a
$1$ in the entry corresponding to $y$, and
$Q_+$ is the $\cS\times\cS$ matrix given by  
\[(Q_+)_{i,j}=\left\{\begin{array}{ll}
i & \mbox{if $i,j\in\cA$, $\alpha(j)=\omega(i)$, and
  $\star(\sigma(i^*))\cap\sigma(j)=\emptyset$}\\
i & \mbox{if $i\in\cA$, $j\in V/G$, and $\omega(i)=j$}\\
0 & \mbox{otherwise}\end{array}\right.\]

Alternatively, using the other automaton described in
Remark~\ref{rem:alt-auto}, we have  
\begin{equation}\label{eq:lambda-}
\lambda_{x,y} = B_{\omega}(I-Q_-)^{-1}E 
\end{equation}
where $B_{\omega}$ is the $1\times\cS$ row vector with all zeros except $1$s
in the entries corresponding to the diagonals with {\em final} vertex $x$,
$E$ is as before, and $Q_-$ is the $\cS\times\cS$ 
matrix given by  
\[(Q_-)_{i,j}=\left\{\begin{array}{ll}
i^* & \mbox{if $i,j\in\cA$, $\alpha(i)=\omega(j)$, and
  $\star(\sigma(i))\cap\sigma(j^*)=\emptyset$}\\
i^* & \mbox{if $i\in\cA$, $j\in V/G$, and $\alpha(i)=j$}\\
0 & \mbox{otherwise}\end{array}\right.\]

The transition matrices $Q_+$ and $Q_-$ can be related using the
involution $j\mapsto j^*:\cA\rightarrow\cA$.  For convenience, we
extend this involution $\cS$ by having it act trivially on the
trivial diagonals $V/G$.  The following fact follows from the explicit
descriptions given above for the transition matrices.  

\begin{proposition}\label{prop:pmQ}
Let $[\ast]$ denote the $\cS\times\cS$ permutation matrix induced by
the involution $j\mapsto j^*:\cS\rightarrow\cS$.  Then 
\[Q_-=[*]Q_+[*].\]
\end{proposition}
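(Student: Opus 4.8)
The plan is to verify the identity $Q_- = [*]\,Q_+\,[*]$ entrywise, reading off both sides from the explicit case‑by‑case descriptions of $Q_+$ and $Q_-$ given just above. Since $[*]$ is the permutation matrix of the involution $j \mapsto j^*$ on $\cS$ (extended to act trivially on $V/G$), conjugation has the effect $\bigl([*]\,Q_+\,[*]\bigr)_{i,j} = (Q_+)_{i^*, j^*}$; so the task reduces to checking that $(Q_-)_{i,j} = (Q_+)_{i^*, j^*}$ for all $i,j \in \cS$.

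First I would dispose of the trivial bookkeeping: if either of $i,j$ lies in $V/G$, then both sides are forced to be $0$ by inspection (a transition into a vertex state originates at a nontrivial diagonal and terminates at a vertex state, so $(Q_-)_{i,j}$ with $i \in V/G$ vanishes, and likewise $(Q_+)_{i^*,j^*}$ with $i^* \in V/G$ vanishes), except in the genuinely interesting boundary case $i \in \cA$, $j \in V/G$, which I handle next. Second, for $i \in \cA$ and $j \in V/G$ (so $j^* = j$), the second line in the definition of $Q_-$ gives $(Q_-)_{i,j} = i^*$ precisely when $\alpha(i) = j$, while the second line in the definition of $Q_+$ gives $(Q_+)_{i^*,j} = i^*$ precisely when $\omega(i^*) = j$; since $\omega(i^*) = \alpha(i)$ for any diagonal, these two conditions coincide and the entries match (the label $i^*$ is literally the same symbol on both sides). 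Third, for the main case $i,j \in \cA$: the first line of $Q_-$ gives $(Q_-)_{i,j} = i^*$ exactly when $\alpha(i) = \omega(j)$ and $\star(\sigma(i)) \cap \sigma(j^*) = \emptyset$; the first line of $Q_+$ applied to the pair $(i^*, j^*)$ gives $(Q_+)_{i^*,j^*} = i^*$ exactly when $\alpha(j^*) = \omega(i^*)$ and $\star(\sigma((i^*)^*)) \cap \sigma(j^*) = \emptyset$. Using $(i^*)^* = i$, $\omega(i^*) = \alpha(i)$, and $\alpha(j^*) = \omega(j)$, the vertex condition $\alpha(j^*) = \omega(i^*)$ becomes $\omega(j) = \alpha(i)$, matching the $Q_-$ condition, and the link condition becomes $\star(\sigma(i)) \cap \sigma(j^*) = \emptyset$, again matching; and the entry value $i^* = (Q_+)_{i^*,j^*}$ equals the label $i^*$ recorded in $Q_-$. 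When the conditions fail, both sides are $0$. This exhausts all cases, so the entrywise identity holds and $Q_- = [*]\,Q_+\,[*]$.

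The only point requiring any care — and the step I would flag as the place a reader could stumble — is keeping the starred/unstarred decorations straight when substituting $i^*, j^*$ into the defining formula for $Q_+$; in particular one must remember that $\sigma$ is computed at the \emph{initial} vertex of its argument, so $\sigma(i^*)$ and $\sigma(i)$ are genuinely different simplices (living in links of different vertices), and it is precisely the asymmetry $\alpha(i^*) = \omega(i)$ that makes the two automata of Remark~\ref{rem:alt-auto} describe the same language. Once that is internalized, the verification is a direct symbol‑matching exercise with no geometric input beyond the elementary identities $(d^*)^* = d$, $\alpha(d^*) = \omega(d)$, and $\omega(d^*) = \alpha(d)$ for diagonals.
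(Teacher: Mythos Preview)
Your proposal is correct and is exactly what the paper has in mind: the paper gives no argument beyond the single sentence ``follows from the explicit descriptions given above for the transition matrices,'' and your entrywise verification that $(Q_-)_{i,j}=(Q_+)_{i^*,j^*}$ using the identities $(d^*)^*=d$, $\alpha(d^*)=\omega(d)$, $\omega(d^*)=\alpha(d)$ is precisely the routine check being alluded to.
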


\section{Growth series and examples}\label{s:growth-series}

Throughout this section $X$ is a CAT($0$) cube complex, and $G$ is
a group acting freely, cellularly, and cocompactly on $X$.  
 
\subsection{Hyperplanes and weights on cube paths}
Parallelism between (unoriented) edges in each cube extends to an  
equivalence relation on the edges of $X$ as follows.  Two 
edges $e$ and $e'$ are equivalent if there exists a sequence of edges
$e=e_1,e_2,\ldots,e_n=e'$ and a sequence of cubes $C_1,\ldots,C_{n-1}$ such
that $e_i$ is parallel to $e_{i+1}$ in the cube $C_i$.  Given an edge
  $e$, we let $[e]$ denote its equivalence class in $X$, and we let $\cH$
denote the set of all such equivalence classes.  

\begin{remark}
Given a cube $C$ in $X$, we define a {\em midplane} to be the intersection of $C$
with any hyperplane passing through the geometric center of $C$ which
is parallel to a codimension-one face.   Any midplane bisects the edges which are
perpendicular to it.  Given a parallel class of edges in $X$, we
define the corresponding {\em hyperplane} or {\em wall} to be the
union of all midplanes that bisect some edge in the parallel class.
Any such hyperplane is an isometrically embedded CAT($0$) cube complex
and separates $X$ into two parts called {\em halfspaces}.  There is a
bijection from parallel classes of edges to the set of hyperplanes
given by mapping the parallel class of an edge $e$ to the (unique)
hyperplane spanned my any midplane that bisects $e$. (Details can be
found in \cite{Sageev}.)  For this reason, we shall often refer to the
elements of $\cH$ as hyperplanes, and will say that an edge $e$ {\em
  crosses} the hyperplane $H$ if $[e]=H$.   
\end{remark}

Now let $\cH/G$ denote the set of $G$-orbits of hyperplanes, and let
$\bbQ((\bt))$ denote the ring of Laurent series in commuting
indeterminates $\bt=(t_h)$ indexed by $h\in\cH/G$.  We then obtain a
monomial ``weighting'' on the set of cube paths in $X/G$ as follows.
First we define this weighting on edge paths.  By definition, an {\em
  edge path} is a cube path consisting only of $1$-dimensional
diagonals (i.e., consisting only of {\em oriented} edges).  For an
edge path $\bd=(d_1,\ldots,d_n)$ in $X$, we let $[d_1],\ldots,[d_n]$
denote the corresponding sequence of parallel classes (forgetting
orientations), and we let $h_1,\ldots,h_n$ denote the corresponding
$G$-orbits in $\cH/G$.  Then we define the {\em   $G$-weight} of $\bd$
to be the monomial in $\bbQ((\bt))$ given by      
\[\tau(\bd)=t_{h_1}t_{h_2}\cdots
t_{h_n}.\]
In other words, if we let $m_h$ denote the number of times the edge
path $\bd$ crosses a hyperplane in the orbit $h$, then 
\[\tau(\bd)=\prod_{h}t_h^{m_h}.\]

More generally, we define the $G$-weight of {\em any} cube path.
Given a diagonal $d$ with $\dim d\geq 1$, we replace it with a minimum length
edge path that starts at $\alpha(d)$ and ends at $\omega(d)$.  If
$\dim d=m$ then this edge path will cross $m$ hyperplaners, and this
set of hyperplanes depends only on $d$, not the choice of edge path.
Given an arbitrary cube path $\bd$, we then 
replace each diagonal $d_i$ with a corresponding edge path and define
$\tau(\bd)$ to be the $G$-weight of the resulting edge path.   (Note
that the length of a cube path $\bd$ is precisely the length of a
representative edge path; this was the reason for defining the length
$|\bd|$ as we did.)  

\begin{proposition}[Sageev \cite{Sageev}]\label{prop:wwd} 
  A minimum length edge path crosses a hyperplane at
  most once, and two minimum length edge paths with the same endpoints
  must must cross the same set of hyperplanes.   
\end{proposition}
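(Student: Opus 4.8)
The plan is to exploit the combinatorial structure of hyperplanes in a CAT($0$) cube complex $X$, namely that each hyperplane $H$ separates $X$ into two disjoint halfspaces $X\setminus H = X_H^+ \sqcup X_H^-$, and that an edge path crosses $H$ exactly when it passes from one halfspace to the other. First I would establish the ``crosses at most once'' claim. Suppose $\bd=(d_1,\ldots,d_n)$ is an edge path from $u$ to $v$ that crosses some hyperplane $H$ at least twice, say the edges $d_i$ and $d_j$ with $i<j$ both cross $H$, and choose such a pair with $j-i$ minimal. Then the initial vertex of $d_i$ and the terminal vertex of $d_j$ lie on the same side of $H$, while the intermediate vertices $\omega(d_i),\ldots,\alpha(d_j)$ lie on the opposite side. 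The key step is a standard surgery: because of minimality of $j-i$, no edge strictly between $d_i$ and $d_j$ crosses $H$, so all of $d_{i+1},\ldots,d_{j-1}$ together with $d_i,d_j$ lie in the combinatorial neighborhood of $H$; one then uses the fact that this neighborhood is isometric to $H\times[0,1]$ as a cube complex (the standard description of the carrier of a hyperplane) to ``push'' the subpath across $H$, producing a new edge path from $u$ to $v$ that omits both $d_i$ and $d_j$ and hence is strictly shorter, contradicting minimality of $\bd$.

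Next I would prove the ``same set of hyperplanes'' claim. Let $\bd$ and $\bd'$ be two minimum length edge paths from $u$ to $v$, and let $\mathcal{H}(\bd)$ and $\mathcal{H}(\bd')$ be the (multi)sets of hyperplanes they cross; by the first part these are genuine sets, with $|\mathcal{H}(\bd)| = |\bd| = |\bd'| = |\mathcal{H}(\bd')|$. The essential observation is that a hyperplane $H$ lies in $\mathcal{H}(\bd)$ if and only if $u$ and $v$ lie in opposite halfspaces of $H$: if $u,v$ are separated by $H$ then any path from $u$ to $v$ must cross $H$, so $H\in\mathcal{H}(\bd)$; conversely if $u,v$ are on the same side of $H$ but $H\in\mathcal{H}(\bd)$, then since $\bd$ crosses $H$ exactly once it would have to start and end on opposite sides, a contradiction. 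Since the condition ``$u$ and $v$ are separated by $H$'' depends only on the endpoints and not on the path, we conclude $\mathcal{H}(\bd) = \{H : H \text{ separates } u \text{ from } v\} = \mathcal{H}(\bd')$.

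The main obstacle is the surgery argument in the first part: one must be careful that pushing the subpath $d_i,\ldots,d_j$ across $H$ using the product structure $H\times[0,1]$ genuinely yields an edge path in $X$ of the claimed length, and that the endpoints are preserved. This requires knowing that the carrier (combinatorial neighborhood) of $H$ embeds in $X$ as $H\times[0,1]$ with the two copies $H\times\{0\}$ and $H\times\{1\}$ being the two boundary components in the respective halfspaces — a structural fact about CAT($0$) cube complexes that I would cite from \cite{Sageev}. Once that product structure is in hand, the surgery is the reflection that exchanges $H\times\{0\}$ with $H\times\{1\}$ and fixes the $H$ coordinate, applied to the portion of $\bd$ lying in the carrier; this reflection turns the two crossing edges $d_i,d_j$ into ``vertical'' moves that can be deleted while the in-between ``horizontal'' edges get relocated to the near side, shortening the path by $2$. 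Everything else reduces to the elementary halfspace bookkeeping sketched above.
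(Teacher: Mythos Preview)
The paper does not supply its own proof of this proposition; it is stated with attribution to Sageev and used as a black box, so there is nothing to compare your argument against. Your outline follows a standard route, and the second paragraph (the halfspace/parity argument showing that the set of hyperplanes crossed depends only on the endpoints) is correct once the first claim is in hand.

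There is, however, a real gap in the surgery step. From minimality of $j-i$ you correctly infer that no intermediate edge crosses $H$, but the next clause---``so all of $d_{i+1},\ldots,d_{j-1}$ \ldots\ lie in the combinatorial neighborhood of $H$''---does not follow: an edge can avoid $H$ while sitting entirely outside the carrier $N(H)\cong H\times[0,1]$, and then your reflection is undefined on it. One repair is to choose $j-i$ minimal over \emph{all} doubly-crossed hyperplanes simultaneously and then argue that if the intermediate subpath exited $N(H)$ along some edge $d_k$, the dual hyperplane $H'$ of $d_k$ would separate $\omega(d_k)$ from $H$; since $\alpha(d_j)\in N(H)$, the subpath would have to recross $H'$ before $d_j$, giving a double crossing with gap strictly smaller than $j-i$, a contradiction. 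This uses that $N(H)$ is convex and that any vertex outside $N(H)$ is separated from $H$ by a hyperplane---facts on the same level as the product structure of the carrier that you already intend to cite. Alternatively one can bypass the surgery entirely: parity gives $|\bd|\geq\#\{H:H\text{ separates }u,v\}$ for every edge path, and the normal cube path of Proposition~\ref{prop:unique-greedy} realises equality, so a minimum-length path crosses each separating hyperplane exactly once and no other hyperplane at all.
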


In particular, the $G$-weight of a minimum length cube path depends
  only on its endpoints.

\subsection{Growth series}
We can transfer the $G$-weighting on cube paths to a weighting on the
groupoid $\cG$ as follows.   Given a homotopy class $[\gamma]$ in
$\cG$, we choose a representative path $\gamma$ and choose a
lift $\tilde{\gamma}$ to $X$.  Since the path $\tilde{\gamma}$ starts
and ends at a vertex of $X$, we can find a minimum length cube path
$\bd$ in the same homotopy class as $\tilde{\gamma}$.  The $G$-weight
of this cube path is independent of all choices (by Proposition~\ref{prop:wwd}), so gives a well-defined
{\em $G$-weight} to the morphism $[\gamma]$.  We denote 
this weight by $\tau([\gamma])$. 

\begin{definition}  Let $G$ be a group acting freely, cellularly, and
  cocompactly on a connected CAT($0$) cube complex $X$.  Then the {\em
  multivariate growth series for $\cG$} is 
  the power series $\cG(\bt)\in\bbQ((\bt))$ defined by 
\[\cG(\bt)=\sum_{[\gamma]\in\cG}\tau([\gamma]).\]
The {\em (ordinary) growth series for $\cG$}
is the single-variable power series $\cG(t)\in\bbQ((t))$
obtained by substituting $t$ for each indeterminate $t_{h}$ in
the multivariate growth series.
\end{definition}

Given $x,y\in V/G$, we define the growth series $\cG_{x,y}(\bt)$ and
$\cG_{x,y}(t)$ by the same formulas, but only sum over homotopy paths
from $x$ to $y$.  In particular, each $x\in V/G$ determines growth
series for the group $G$, given by $G_x(\bt)=\cG_{x,x}(\bt)$ and
$G_x(t)=\cG_{x,x}(t)$.  It follows from the bijection
$\cL\rightarrow\cG$ and the discussions above, that the growth series
$\cG_{x,y}(\bt)$ is the specialization (discussed in \ref{ss:spec}) of
the characteristic series $\lambda_{x,y}$ with respect to the substitution  
$\cA\rightarrow\bbQ(\bt)$ that maps each diagonal $j=\bar{d}$ in $\cA$
to its corresponding weight $\tau([j])=\tau(d)$.  Applying this same
substitution to (\ref{eq:lambda+}) and (\ref{eq:lambda-}), gives the
following. 

\begin{proposition}\label{prop:pm-growth}
Let $Q_+(\bt)$ (resp., $Q_+(t)$) denote the transition matrix $Q_+$
with each letter $j=\bar{d}\in\cA$ replaced by the monomial
$\tau(d)$ (resp., $t^{|d|}$) where $d$ is any lift of $\bar{d}$.
Define $Q_-(\bt)$ and $Q_-(t)$ similarly.  Then we have the following
rational function representations:
\[\cG_{x,y}(\bt)=B_{\alpha}(I-Q_+(\bt))^{-1}E=B_{\omega}(I-Q_-(\bt))^{-1}E,\]
and
\[\cG_{x,y}(t)=B_{\alpha}(I-Q_+(t))^{-1}E=B_{\omega}(I-Q_-(t))^{-1}E.\]
\end{proposition}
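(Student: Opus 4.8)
The plan is to show that $\mathcal{G}_{x,y}(\mathbf{t})$ (and likewise $\mathcal{G}_{x,y}(t)$) is exactly the specialization of the characteristic series $\lambda_{x,y}$ under the monomial substitution $j = \bar{d} \mapsto \tau(d)$, and then to invoke the rational representations (\ref{eq:lambda+}) and (\ref{eq:lambda-}) together with the functoriality of this substitution. Concretely, I would begin by recalling the bijection $\mathcal{L} \to \mathcal{G}$ established in Section~\ref{s:cc}: every morphism $[\gamma] \in \mathcal{G}_{x,y}$ corresponds to a unique normal cube path in $X/G$, hence to a unique word $\alpha = j_1 j_2 \cdots j_n \in \mathcal{L}_{x,y}$ (the sequence of nontrivial diagonals read off the cube path). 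This identifies $\mathcal{L}_{x,y}$ as a normal form for $\mathcal{G}_{x,y}$.

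Next I would verify that the substitution is compatible with the weighting. Given the normal cube path $\mathbf{d} = (d_1,\ldots,d_n)$ lifting $[\gamma]$, the $G$-weight $\tau([\gamma])$ was defined (Section~\ref{s:growth-series}) by replacing each diagonal $d_i$ with a minimum-length edge path and taking the product of the $t_h$'s crossed; by Proposition~\ref{prop:wwd} this is well-defined and multiplicative along the cube path, so $\tau([\gamma]) = \prod_{i=1}^{n}\tau(d_i)$. Since $\tau(d_i)$ depends only on the diagonal $\bar{d_i} = j_i$ in $X/G$ (the crossed hyperplane orbits are $G$-invariant), the assignment $\phi\colon j = \bar d \mapsto \tau(d)$ is a well-defined monomial substitution $\mathcal{A} \to \mathbb{Q}((\mathbf{t}))$, and it induces a specialization homomorphism $\mathbb{Q}((\mathcal{A})) \to \mathbb{Q}((\mathbf{t}))$ in the sense of \ref{ss:spec}. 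Summing $\phi(\alpha) = \tau([\gamma])$ over all $\alpha \in \mathcal{L}_{x,y}$ shows that the specialization of $\lambda_{x,y}$ under $\phi$ is precisely $\mathcal{G}_{x,y}(\mathbf{t})$. The ordinary case is the further substitution $t_h \mapsto t$, equivalently the substitution $j = \bar d \mapsto t^{|d|}$, since an $m$-dimensional diagonal crosses exactly $m$ hyperplanes.

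Finally, I would apply this specialization homomorphism term-by-term to the matrix identities (\ref{eq:lambda+}) and (\ref{eq:lambda-}). Since $B_\alpha$ and $E$ have entries in $\mathbb{Q}$ they are unchanged, and applying $\phi$ entrywise to $Q_+$ (resp.\ $Q_-$) yields exactly the matrix $Q_+(\mathbf{t})$ (resp.\ $Q_-(\mathbf{t})$) of the statement, by comparing the explicit formula for $(Q_+)_{i,j}$ in \ref{ss:gcs} with the definition of $Q_+(\mathbf{t})$ here. Because $\phi$ is a ring homomorphism and (as noted in \ref{ss:spec}) the specialization of $I - Q_+$ is invertible over the ring of rational functions in $\mathbf{t}$, it commutes with the matrix inverse $(I - Q_+)^{-1}$; hence $\mathcal{G}_{x,y}(\mathbf{t}) = \phi(\lambda_{x,y}) = B_\alpha (I - Q_+(\mathbf{t}))^{-1} E$, and the same argument with the second automaton gives $B_\omega (I - Q_-(\mathbf{t}))^{-1} E$. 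The $Q(t)$ identities follow by composing with $t_h \mapsto t$. The only genuinely delicate point — and the one I would be careful to justify rather than assert — is that the specialization commutes with $(I - Q)^{-1}$, i.e.\ that applying $\phi$ to the formal sum $I + Q + Q^2 + \cdots$ gives the power-series expansion of the rational matrix $(I - Q_+(\mathbf{t}))^{-1}$; this rests on the monomials $\tau(d)$ being nontrivial (strictly positive total degree), so that each fixed multidegree receives contributions from only finitely many powers $Q_+^k$, making the term-by-term specialization legitimate.
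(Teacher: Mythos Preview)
Your proposal is correct and follows exactly the approach the paper takes: the proposition is stated as an immediate consequence of the observation (given just before it) that $\cG_{x,y}(\bt)$ is the specialization of $\lambda_{x,y}$ under $j=\bar d\mapsto\tau(d)$, applied to the rational representations (\ref{eq:lambda+}) and (\ref{eq:lambda-}). Your write-up is considerably more detailed than the paper's one-sentence justification, but the underlying argument is the same.
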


\subsection{Examples}
For a first example, let $Y$ be the target graph in Figure~\ref{fig:ex1}, and
let $X$ be the universal cover.  Then $X$ is a connected CAT($0$) cube
complex and the group $G=\pi_1(Y)\cong\bbZ$ acts freely on $X$.  
\begin{figure}[ht]
\begin{center}
\psfrag{a}{$a$}
\psfrag{b}{$b$}
\psfrag{x}{$x$}
\psfrag{y}{$y$}
\psfrag{ta}{$\tilde{a}$}
\psfrag{tb}{$\tilde{b}$}
\psfrag{tx}{$\tilde{x}$}
\psfrag{ty}{$\tilde{y}$}
\includegraphics[scale = .6]{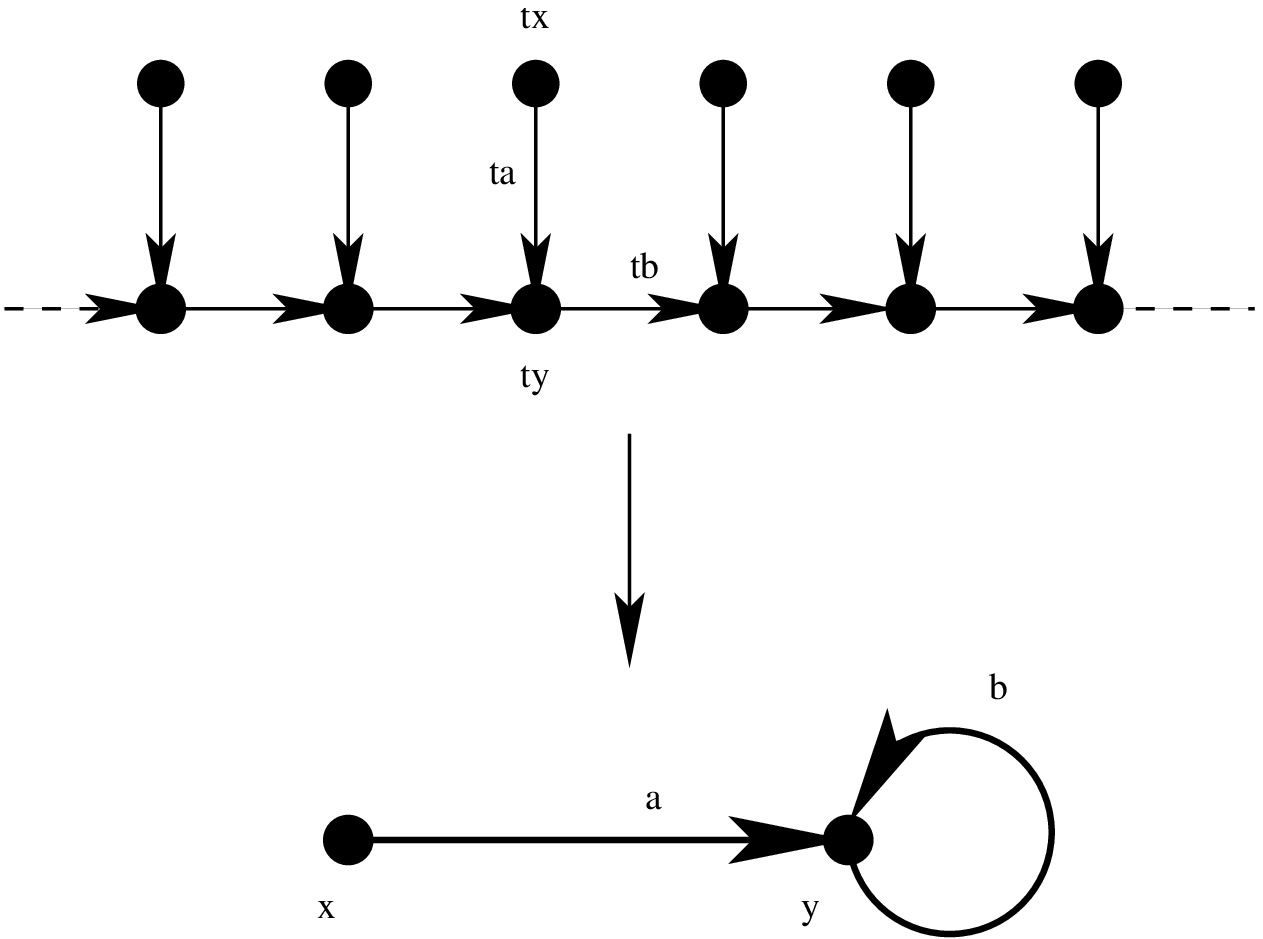}
\caption{\label{fig:ex1}}
\end{center}
\end{figure}
There are $2$ trivial diagonals $x$ and $y$ and $4$
nontrivial diagonals $a,a^*,b,b^*$ in $Y=X/G$.  Thus
$\cA=\{a,a^*,b,b^*\}$ and $\cS=\{a,a^*,b,b^*,x,y\}$.  The corresponding
transition matrix (with respect to the ordering $(x,y,a,a^*,b,b^*)$)
is given by 
\[Q_+=\left[\begin{array}{cccccc} 
0 & 0 & 0 & 0 & 0 & 0\\
0 & 0 & 0 & 0 & 0 & 0\\
0 & a & 0 & 0 & a & a\\
a^* & 0 & 0 & 0 & 0 & 0\\
0 & b & 0 & b& b & 0\\
0 & b^* & 0 &b^*& 0 & b^*\end{array}\right].\]
Substituting $t_1,t_2,t_3,t_4$ for the symbols $a,a^*,b,b^*\in\cA$,
respectively, and using Proposition~\ref{prop:pm-growth}, we 
obtain the multivariate growth series:
\[\cG_{x,x}(\bt) = 1-\frac{t_1t_2(2t_3t_4-t_3-t_4)}{(1-t_3)(1-t_4)},\hspace{.7in}
\cG_{x,y}(\bt) = \frac{t_1(1-t_3t_4)}{(1-t_3)(1-t_4)},\]
\[\cG_{y,x}(\bt) = \frac{t_2(1-t_3t_4)}{(1-t_3)(1-t_4)},\hspace{.7in}
\cG_{y,y}(\bt) = \frac{1-t_3t_4}{(1-t_3)(1-t_4)}.\]
With the substitutions $t_i=t$, we obtain the ordinary growth series:
\[\cG_{x,x}(t) = \frac{2t^3-t+1}{1-t} = 1+2t^3+2t^4+2t^5+\cdots.\]
\[\cG_{x,y}(t) =\cG_{y,x}(t)= \frac{t^2+t}{1-t} = t+2t^2+2t^3+2t^4+\cdots.\]
\[\cG_{y,y}(t) = \frac{1+t}{1-t}= 1+2t+2t^2+2t^3+\cdots.\]
   
For a more intricate example, and one that illustrates our reciprocity
formula, we let $X$ be the cell decomposition of the hyperbolic plane
$\bbH^2$ that is dual to the tesselation by right-angled hexagons
(Figure~\ref{fig:hextiling}).   
\begin{figure}
\begin{center}
\includegraphics[scale = .6]{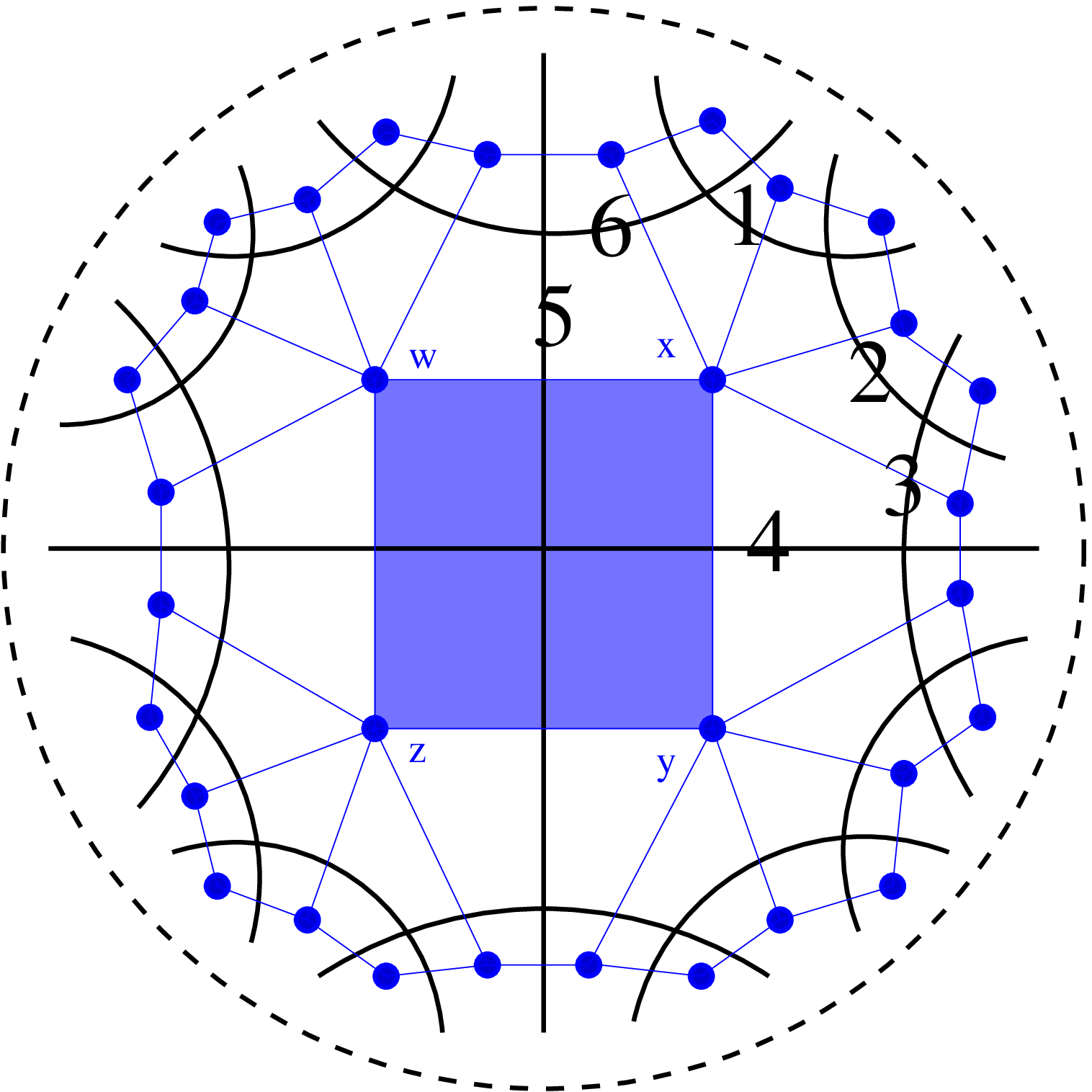}
\caption{\label{fig:hextiling}}
\end{center}
\end{figure}
All of the $2$-cells of $X$ are squares and if one gives each of them the
standard Euclidean metric, $X$ is a CAT($0$) square complex.
If $W$ is the right-angled Coxeter group generated by reflections
across the sides of one of the hexagons, then $W$ acts cellularly
and cocompactly (but not freely) on $X$.  If $s_1,\ldots,s_6$ are
the reflections across the lines numbered $1,2,3,4,5,6$ in the figure,
then there is a surjective homomorphism $W\rightarrow\bbZ_2\times\bbZ_2$
defined by mapping $s_1,s_3,s_5$ to $(1,0)$ and $s_2,s_4,s_6$ to
$(0,1)$.   The kernel $G$ of this homomorphism is an index $4$
subgroup acting freely on $X$, and the quotient $X/G$ is the surface
of genus $2$ shown in Figure~\ref{fig:ex2}.  
\begin{figure}
\begin{center}
\includegraphics[scale = .6]{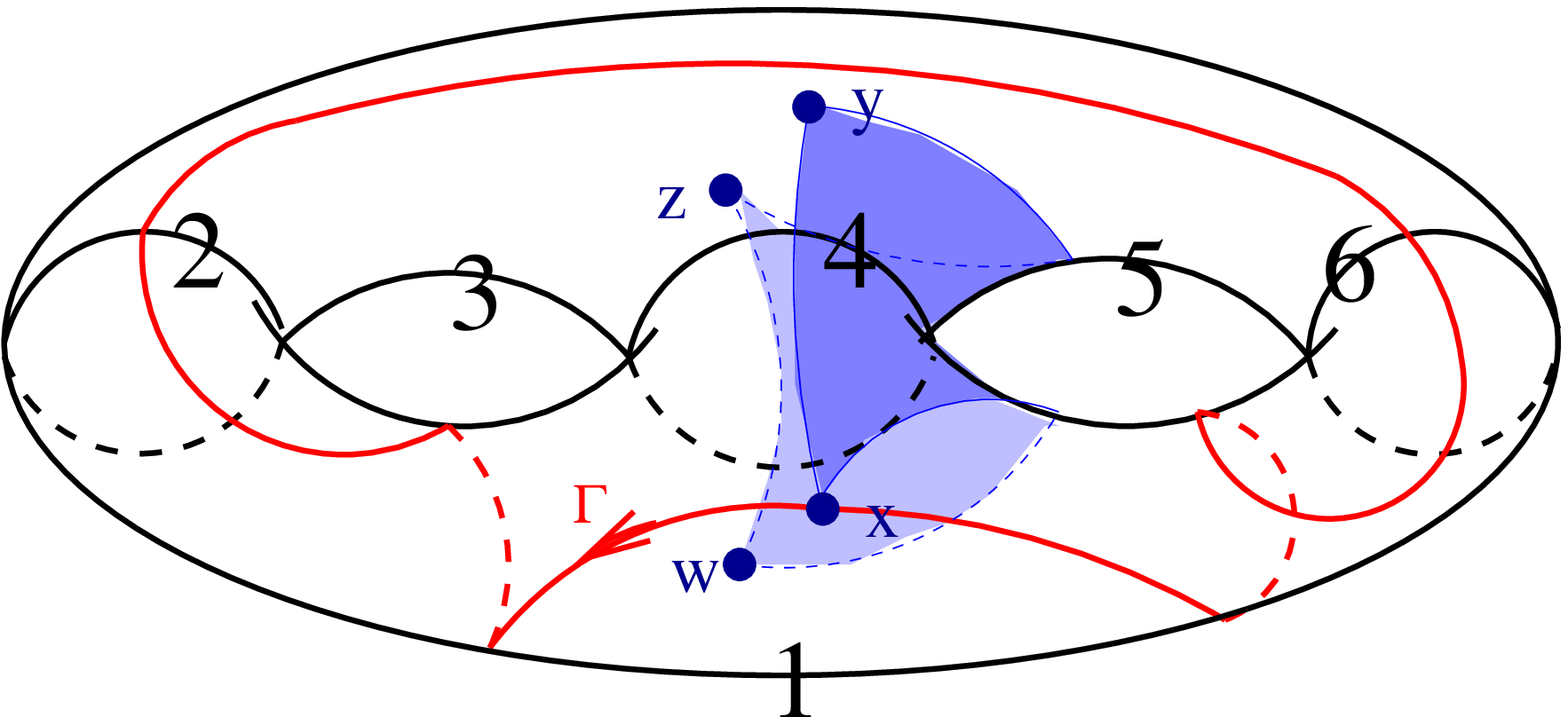}
\caption{\label{fig:ex2}}
\end{center}
\end{figure}

The cell structure on $X/G$ consists of $4$ vertices ($x,y,z,w$), $12$
$1$-cells, and $6$ $2$-cells.  It follows that the state set $\cS$ will have $52$
diagonals, $4$ of which are trivial.  There are $12$ orbits of edge
classes in $\cH/G$, so the matrix $I-Q_+(\bt)$ is a $52\times 52$ matrix
with $12$ indeterminates.  Inverting the single variable matrix
$I-Q_+(t)$  matrix is much more manageable and, with
Proposition~\ref{prop:pm-growth}, yields:
\[\cG_{x,x}(t) = \frac{1-2t^2+t^4}{1-14t^2+t^4},\;\;\; \cG_{x,y}(t)=
\frac{3t+3t^3}{1-14t^2+t^4},\;\;\;\cG_{x,z}(t) =\frac{12t^2}{1-14t^2+t^4}.\]
(By symmetry, each of the remaining series $\cG_{i,j}(t)$ is equal to
one of these.)  Note that the weighting on edge paths corresponding to
the single variable growth series has the interpretation of counting
the number of times the path crosses one of the $6$ numbered curves
(these are the hyperplane orbits) in Figure~\ref{fig:ex2}.  In
particular, expanding the series for 
$\cG_{x,x}=\pi_1(X/G,x)$ gives the growth series for the surface group $G$
relative to the basepoint $x$: 
\[G_x(t)=1+12t^2+168t^4+2340t^6+\cdots\]
so there are $12$ elements that cross $2$ curves, $168$ that cross $4$
curves, $2340$ that cross $6$ curves, and so on.  The indicated loop
$\Gamma$, for example, is a minimal length representative in its
homotopy class.  It crosses the curves numbered $2,3,5,6$ each once and crosses
the curve numbered $1$ twice, thus it represents one of the $2340$
group elements of length $6$.  

For this example, the surface $X/G$ is an Eulerian cube complex (in
fact, a manifold), so the hypotheses of our main theorem hold.  In
particular, all three of the growth series $\cG_{x,x}(t)$,
$\cG_{x,y}(t)$, and $\cG_{x,z}(t)$ satisfy the reciprocity formula
$G(t^{-1})=G(t)$.   

\section{Reciprocity for Eulerian cube complexes}\label{s:pf}

In this section we shall prove the main theorem stated in the
introduction, but in the more  general setting of characteristic series rather than growth series.
To state this theorem, we first extend the involution
$\bar{d}\mapsto\bar{d}^*$ defined on $\cA$ to the entire monoid
$\cA^*$.  Given a word $\bar{\bd}=\bar{d}_1\bar{d}_2\cdots\bar{d}_n$,
we define $\bar{\bd}^*$ to be
$\bar{d}_1^*\bar{d}_2^*\cdots\bar{d}_n^*$. By linearity, this extends
to an involution on $\bbQ((\cA))$.  Given a matrix $M$ defined over
$\bbQ((\cA))$ we define $M^*$ to be the matrix defined by
$(M^*)_{i,j}=M_{i,j}^*$.  It follows that if a series
$\lambda\in\bbQ((\cA))$ has rational expression of the form  
\[\lambda=B(I-Q)^{-1}E\]
as in \ref{ss:char-series}, then so does $\lambda^*$ and we have 
\begin{equation}\label{eq:lambda*}
\lambda^*=B(I-Q^*)^{-1}E.
\end{equation}

In terms of the characteristic series for normal cube paths, our
reciprocity formula is the following.

\begin{theorem}\label{thm:main}  Let $G$ be a group acting freely,
  cellularly, and cocompactly on a CAT($0$) cube complex $X$, and let
  $V$ be the set of vertices of $X$.  If $X/G$ is Eulerian of
  dimension $n$, then for any $x,y\in V/G$, the reciprocal
  $\bar{\lambda}_{x,y}$ exists and is given by 
\[\bar{\lambda}_{x,y}=(-1)^n\lambda_{x,y}^*.\]
\end{theorem}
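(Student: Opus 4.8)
The plan is to compute the reciprocal $\bar\lambda_{x,y}$ directly from one of the two rational representations \eqref{eq:lambda+} or \eqref{eq:lambda-} and to match it, up to sign, with $\lambda_{x,y}^*$ using \eqref{eq:lambda*}. The reciprocal operation replaces each letter $j\in\cA$ in a transition matrix by its formal inverse $j^{-1}$, so from \eqref{eq:lambda+} we have $\bar\lambda_{x,y}=B_\alpha(I-\bar Q_+)^{-1}E$ where $(\bar Q_+)_{i,j}=i^{-1}$ exactly when $(Q_+)_{i,j}=i$. The first thing I would do is relate $\bar Q_+$ to $Q_-$. A row of $Q_+$ indexed by $i$ has its nonzero entries (all equal to $i$) in the columns $j$ with $\alpha(j)=\omega(i)$, $\star(\sigma(i^*))\cap\sigma(j)=\emptyset$ (plus the vertex column $\omega(i)$); a column of $Q_-$ indexed by $j$ has its nonzero entries (all equal to $j^*$) in the rows $i$ with $\alpha(i)=\omega(j)$, $\star(\sigma(i))\cap\sigma(j^*)=\emptyset$. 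After renaming, the support of $\bar Q_+$ and of a suitably transposed/starred $Q_-$ are the same combinatorial set; the issue is purely bookkeeping of which diagonal labels the entry. I expect a clean identity of the form $\bar Q_+ = D\, (Q_-^{\!*})^{\mathrm{t}} D^{-1}$ or $\bar Q_+ = [\ast]\,(\text{something})\,[\ast]$ using Proposition~\ref{prop:pmQ}, where $D$ is a diagonal matrix with entries $j$ or $j^{-1}$ that conjugates the discrepancy away — this conjugation will not affect $B_\alpha(I-\,\cdot\,)^{-1}E$ provided the $B$ and $E$ vectors, which are supported on vertex states, are fixed by $D$ (they are, since $D$ acts trivially on $V/G$-indexed coordinates, the letters there being the only ones relevant).

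The genuinely new input — the place the Eulerian hypothesis enters — is a linear-algebra identity that converts the combinatorial reindexing above into the factor $(-1)^n$. This is where I would invoke the Charney–Davis / Scott-style argument: one shows that $I-Q_-$ (or its starred transpose) and $I-\bar Q_+$ are related by multiplication on one or both sides by a matrix whose determinant contributes the sign, and whose action on the $(B,E)$ pair is trivial. Concretely, I anticipate an identity $I-\bar Q_+ = R^{-1}(I - Q_+^{*\,\mathrm t})R \cdot U$ (schematically) where $U$ is block-triangular with respect to the partition $\cS = \cA \sqcup V/G$, unipotent up to the Eulerian correction, and the Eulerian condition $\chi(\link(\sigma))=1+(-1)^{\dim\link\sigma}$ for every cell $\sigma$ is exactly what forces the alternating-sum cancellations that make the relevant determinant or matrix identity work degree-by-degree in the diagonals' dimensions $|d|$. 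The combinatorics here is the "link" recursion: a nontrivial diagonal $d$ of dimension $m$ corresponds to an $(m-1)$-simplex $\sigma(d)$ in a link, and summing $(-1)^{|d|}$ over diagonals spanning cubes at a vertex, with the normality ($\star\cap\,=\emptyset$) constraint, produces reduced Euler characteristics of links and their stars, all of which are $\pm1$ precisely under the Eulerian hypothesis.

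In order, the steps are: (1) write down $\bar\lambda_{x,y}=B_\alpha(I-\bar Q_+)^{-1}E$ and $\lambda_{x,y}^* = B_\omega(I-Q_-^{*})^{-1}E$ from \eqref{eq:lambda+}, \eqref{eq:lambda-}, \eqref{eq:lambda*}, noting $B_\alpha$ here uses the $x$-start convention and $B_\omega$ the $x$-finish convention, so Proposition~\ref{prop:pmQ}'s involution $[\ast]$ swaps them; (2) prove the purely formal matrix identity relating $\bar Q_+$ and $Q_-^{*}$ by conjugation by an explicit diagonal matrix of letters, checking entry-by-entry from the case definitions of $Q_+$ and $Q_-$; (3) reduce the theorem to an identity of the form $B_\alpha(I-\bar Q_+)^{-1}E = (-1)^n B_\omega(I-Q_-^{*})^{-1}E$, i.e. to showing a certain ratio of "determinant-like" quantities equals $(-1)^n$; (4) establish that identity using the Eulerian hypothesis via the link Euler-characteristic cancellation, importing the corresponding lemma from \cite{Scott} and adapting it from the right-angled Coxeter setting to arbitrary NPC cube complexes — the adaptation is legitimate because the only facts used are the flag-link condition and normality of cube paths, both available here; (5) observe that the reciprocal therefore exists (the conjugation in step (2) exhibits $I-\bar Q_+$ as a product of invertible matrices over $\bbQ((\cA))$), completing the proof. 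The main obstacle is step (4): making the Eulerian condition produce exactly $(-1)^n$ requires the right inductive setup on the poset of cubes/links, and one must be careful that the noncommutativity of $\bbQ((\cA))$ does not obstruct the determinant-style manipulation — this is handled, as in \cite{Scott}, by working with the conjugation identity at the level of matrices rather than determinants, so that "$(-1)^n$" appears as a scalar matrix identity after multiplying $I-\bar Q_+$ by the appropriate block-unipotent correction factors encoding the link Euler characteristics.
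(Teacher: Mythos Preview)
Your overall plan---compute $\bar\lambda_{x,y}=B_\alpha(I-\bar Q_+)^{-1}E$, rewrite $I-\bar Q_+$ in terms of $I-Q_-^*$, and read off $(-1)^n\lambda_{x,y}^*$ via \eqref{eq:lambda-} and \eqref{eq:lambda*}---matches the paper. But step~(2) of your outline, as you state it, will not go through, and the missing ingredient is exactly the one that makes the argument work.

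You propose to relate $\bar Q_+$ and $Q_-^*$ by conjugation by a diagonal matrix of letters (possibly combined with a transpose). This fails on two counts. First, the \emph{supports} of $\bar Q_+$ and $Q_-^*$ are different: $(\bar Q_+)_{i,j}\neq 0$ requires $\alpha(j)=\omega(i)$ and $\star(\sigma(i^*))\cap\sigma(j)=\emptyset$, while $(Q_-^*)_{i,j}\neq 0$ requires $\alpha(i)=\omega(j)$ and $\star(\sigma(i))\cap\sigma(j^*)=\emptyset$. No diagonal conjugation (which preserves supports) can reconcile these, and transposition over the noncommutative ring $\bbQ((\cA))$ is an anti-homomorphism, so it does not interact cleanly with $(I-\cdot)^{-1}$. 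Second, the correct identity relates $I-\bar Q_+$ and $I-Q_-^*$ rather than $\bar Q_+$ and $Q_-^*$ themselves, and the intertwining matrix on one side is \emph{not} diagonal.

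What the paper actually does is factor $Q_+=D_0J$, where $D_0$ is diagonal with $(i,i)$-entry $(-1)^{|i|-1}i$ for $i\in\cA$ and $J$ is a matrix with entries in $\{0,\pm 1\}$ (essentially the ``anti-incidence matrix'' of the vertex links from \cite{Scott}, extended by a $-1$ on the vertex diagonal). A short manipulation then yields
\[
I-\bar Q_+\;=\;-\bar D\,(I-Q_-^*)\,J,
\]
using $\bar D=D^{-1}$ and Proposition~\ref{prop:pmQ}. The Eulerian hypothesis enters \emph{twice}, and neither appearance is as a block-unipotent correction: first, it forces $J$ to be invertible with $J^{-1}=[\ast]J[\ast]$ (this is the ``anti-incidence matrix is an involution for Eulerian spheres'' fact from \cite{Scott}), which is what makes $I-\bar Q_+$ invertible at all; second, it gives the column sums of $[\ast]J$ as $\bar\chi(\link(x))=(-1)^{n-1}$, so that left-multiplying by $B_\alpha[\ast]J$ produces $-(-1)^{n-1}B_\alpha[\ast]=(-1)^n B_\omega$. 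Once you have the factorization $Q_+=D_0J$ and these two properties of $J$, the proof is three lines; without identifying $J$ and its role, step~(4) of your plan has nothing concrete to induct on.
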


To obtain the theorem stated in the introduction, we simply
specialize the series $\lambda$ and $\lambda^*$ to the commutative ring
$\bbQ((\bt))$.  Any diagonal $\bar{d}\in\cA$ is mapped to the product
of indeterminates $t_h$ where $h$ is a hyperplane in $X/G$ crossed by
$\bar{d}$.  Since $\bar{d}$ and $\bar{d}^*$ cross the same hyperplanes, 
both $\lambda_{x,y}$ and $\lambda^*_{x,y}$ specialize to the same
growth series $\cG_{x,y}(\bt)$.  Since $\bar{\lambda}_{x,y}$
specializes to $\cG_{x,y}(\bt^{-1})$, we then obtain the formula
\[\cG_{x,y}(\bt^{-1})=\cG_{x,y}(\bt)\]
from the introduction.  The remainder of this section will be
devoted to the proof of Theorem~\ref{thm:main}.

\subsection{Properties of the transition matrices} 
Let $Q$ denote the transition matrix $Q_+$ for the automaton given in
\ref{ss:gcs} for the language $\cL$.  Since the nonzero entries in
a given row are always equal, we can factor out a diagonal matrix
on the left.  We define $\cS\times\cS$ matrices $J_0$
and $D_0$ by
\[(J_0)_{i,j}=\left\{\begin{array}{ll}
(-1)^{|i|-1} & \mbox{if $Q_{i,j}\neq 0$}\\
0 & \mbox{otherwise}\end{array}\right.\]
and 
\[(D_0)_{i,j}=\left\{\begin{array}{ll}
(-1)^{|i|-1}i & \mbox{if $i=j\in\cA$}\\
0 & \mbox{otherwise}\end{array}\right.\]
where $|i|$ denotes the length $|d|$ for any of any lift $d$ for $i$.
Then we have the factorization $Q=D_0J_0$.  

To enable various matrix inversions, we define extensions of the matrices $J_0$ and
$D_0$ by replacing the zero diagonal entries corresponding to states in
$V/G$ with $\pm 1$.  We define $J$ and $D$ as follows:
\begin{equation}\label{eq:J}
J_{i,j}=\left\{\begin{array}{ll}
-1 & \mbox{if $i=j\in V/G$}\\
(J_0)_{i,j} & \mbox{otherwise}\end{array}\right.
\end{equation}
and 
\[D_{i,j}=\left\{\begin{array}{ll}
1 & \mbox{if $i=j\in V/G$}\\
(D_0)_{i,j} & \mbox{otherwise}\end{array}\right.\]
We now collect various properties of these matrices
for future reference.  Recall that the reduced Euler characteristic of
a simplicial complex $K$, denoted by $\bar{\chi}(K)$, is $\chi(K)-1$
where $\chi(K)$ is the usual Euler characteristic.   

\begin{lemma}\hspace{.1in} \label{lem:main}  
\begin{enumerate}
\item $Q=D_0J_0=DJ_0=D_0J$.
\item $[*]D[*]=D^*$ and $[*]D_0[*]=D_0^*$. 
\item For any $j\in\cS$, the sum of the entries in the $j$th column of $[*]J$ is
  equal to $\bar{\chi}(\link(\alpha(j)))$.  
\item If $X/G$ is Eulerian, then $J$ is invertible and $J^{-1}=[*]J[*]$.
\end{enumerate}
\end{lemma}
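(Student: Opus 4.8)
The four parts are of quite different character, so I would dispatch them in order of increasing difficulty. Part (1) is immediate from the definitions: $Q = D_0 J_0$ is already asserted in the text (it is just the row-factorization, since the nonzero entries of row $i$ of $Q$ all equal $i$, and $(-1)^{|i|-1}\cdot(-1)^{|i|-1}=1$). For the other two equalities, note that $D$ differs from $D_0$ only in the $V/G$-diagonal entries, and $J$ differs from $J_0$ only there too; but if $i\in V/G$ then row $i$ of $Q$ is entirely zero (trivial diagonals label no transitions), so $D_0 J = D_0 J_0$ and $D J_0 = D_0 J_0$ as well — the altered entries are multiplied against zeros. Part (2) is an equally short bookkeeping check: $[*]$ is the permutation matrix of the involution $j\mapsto j^*$, and since $|j^*|=|j|$, conjugating the diagonal matrix $D$ by $[*]$ just permutes the diagonal, sending the entry $(-1)^{|i|-1}i$ in position $i$ to position $i^*$, which is exactly $(D^*)_{i^*,i^*}=(-1)^{|i^*|-1}i^*$; the same argument works verbatim for $D_0$. (One should observe $[*]^2=I$ so no distinction between left and right conjugation is needed.)

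Part (3) is where the geometry enters, and it is the conceptual heart of the lemma. Fix $j\in\cS$ and set $v=\alpha(j)$. The $j$th column of $[*]J$ is obtained from the $j$th column of $J$ by permuting rows via $*$, so its entry sum equals the $j$th column-sum of $J$ after reindexing — but it is cleaner to compute directly which $i$ contribute. By the definition of $Q=Q_+$, we have $Q_{i,j}\neq 0$ precisely when $i\in\cA$ with $\omega(i)=\alpha(j)=v$ and $\star(\sigma(i^*))\cap\sigma(j)=\emptyset$, i.e. when $\sigma(i^*)$ is a simplex of $\link(v)$ disjoint (in the star sense) from $\sigma(j)$; plus there is the single contribution from the row $i=j$ when $j\in V/G$... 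I would instead phrase the count on the $*$-transformed side: the $(i,j)$ entry of $[*]J$ is $J_{i^*,j}$, which is nonzero exactly when $i^*$ labels a transition into $j$ (contributing $(-1)^{|i|-1}$) or when $i=j\in V/G$ (contributing $-1$). Translating the transition condition through $*$: these are exactly the diagonals $i$ with $\alpha(i)=v$ and $\star(\sigma(i))\cap\sigma(j^*)=\emptyset$. Now the diagonals $i$ with $\alpha(i)=v$ and $\sigma(i)$ disjoint from $\star(\sigma(j^*))$ correspond bijectively, via $i\mapsto\sigma(i)$, to the simplices of $\link(v)$ lying in the subcomplex $L$ obtained by deleting the open star of $\sigma(j^*)$ — together with the ``empty simplex'' case $i=v$ itself. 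Each simplex $\tau$ of dimension $|i|-1$ contributes $(-1)^{|i|-1}=(-1)^{\dim\tau}$, and the empty-simplex/vertex term contributes $(-1)^{-1}\cdot(\text{adjust})$; summing gives $\sum_{\tau\in L}(-1)^{\dim\tau} - 1 = \bar\chi(L)$ — but $L$ is the complement of an open star in $\link(v)$, which deformation retracts onto... I need $\bar\chi(L) = \bar\chi(\link(v))$, which holds because $\link(v)$ is the union of $\star(\sigma(j^*))$ (a cone, hence $\bar\chi = 0$... no: $\bar\chi$ of a cone is $0$ only if it's a full simplex) — here is the one subtle point: I would use that $\star(\sigma(j^*))$ in a \emph{flag} complex is itself contractible (it is the closed star of a simplex), so $\bar\chi(\star)=0$, and by inclusion–exclusion on $\link(v) = L \cup \star(\sigma(j^*))$ with intersection the ``link of $\sigma(j^*)$ within $\link(v)$'', one gets $\bar\chi(\link(v)) = \bar\chi(L) + \bar\chi(\star) - \bar\chi(L\cap\star)$, and a short argument (or the Eulerian hypothesis applied to the cell $C(j^*)$, giving $\chi$ of this link $= 1 + (-1)^{\dim}$) forces the correction terms to cancel. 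This inclusion–exclusion/star-contractibility step is the part I expect to require the most care; getting the empty-simplex bookkeeping and the sign conventions exactly right is where errors hide.

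Part (4) then follows formally from (3). Assuming $X/G$ is Eulerian, every link $\link(\alpha(j))$ is an Eulerian complex, hence has the Euler characteristic of a sphere of its dimension, so $\bar\chi(\link(\alpha(j))) = \chi - 1 = (-1)^{\dim}$ — in particular it is $\pm 1$, never $0$. By part (3), the column sums of $[*]J$ are all $\pm1$; more precisely I would show $[*]J$ is ``unitriangular up to signs'' or directly that $([*]J)^2 = I$. The cleanest route: compute $([*]J)^2 = [*]J[*]J$, and show it equals $I$ entrywise using the column-sum identity of (3) together with the flag/link structure — equivalently, show $J^{-1} = [*]J[*]$ by verifying $J\cdot[*]J[*] = I$, where the $(i,k)$ entry of the product is $\sum_j J_{i,j}([*]J[*])_{j,k} = \sum_j J_{i,j}(J)_{j^*,k^*}$, and this collapses to $\delta_{i,k}$ because for fixed $i$ the sum over $j$ with $J_{i,j}\neq 0$ is exactly a column-sum of the type controlled by (3) (when $i\neq k$ the relevant link is a cone and contributes $0$; when $i=k$ it contributes $\pm1$ squaring to $1$). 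I would streamline this by isolating the combinatorial identity ``$\sum_j J_{i,j}J_{j^*,k^*} = \delta_{i,k}$'' as the single computation, reducing it to (3) via the observation that the $j$-sum, for fixed $i$, ranges over simplices in a star of a vertex of $\link(\alpha(i^*))$ and hence has reduced Euler characteristic $0$ unless degeneracy forces $i=k$. Thus $J$ is invertible with $J^{-1}=[*]J[*]$, completing the lemma.
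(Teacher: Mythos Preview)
Your treatment of (1) and (2) is correct and matches the paper's (which just says these follow from the definitions). The genuine gap is in part (3). You correctly identify the nonzero entries of column $j$ of $[*]J$ (for $j\in\cA$) as those $i$ with $\alpha(i)=\alpha(j)$ and $\star(\sigma(i))\cap\sigma(j)=\emptyset$ (your $\sigma(j^*)$ is a slip; it should be $\sigma(j)$, which already lives in $\link(\alpha(j))$). But you then silently replace this with ``$\sigma(i)$ lies outside $\star(\sigma(j))$'' so as to identify the contributing simplices with a subcomplex $L$. These conditions are \emph{not} equivalent, and the set $\{\tau:\star(\tau)\cap\sigma(j)=\emptyset\}$ is not a subcomplex at all: in the path $a$--$b$--$c$ with $\sigma(j)=\{a\}$, the edge $\{b,c\}$ satisfies the condition (its star is just itself) while its face $\{b\}$ does not (the edge $\{a,b\}$ lies in $\star(\{b\})$ and meets $\{a\}$). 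So your Euler-characteristic-of-$L$ computation cannot get started. A direct argument does exist via inclusion--exclusion, but over the \emph{vertices} of $\sigma(j)$ rather than via a subcomplex decomposition: for each nonempty face $S\subseteq\sigma(j)$ the simplices $\tau$ with $\tau\cup S$ a simplex form a cone (using that $\link(v)$ is flag), hence contribute Euler characteristic $1$, and the alternating sum collapses to $\chi(\link(v))-1$. Note too that (3) is stated without any Eulerian hypothesis, whereas your sketch invokes it.

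The paper takes a different route altogether: it observes that $[*]J$ is block-diagonal with blocks $J_x$ indexed by $x\in V/G$ (since $([*]J)_{i,j}=0$ unless $\alpha(i)=\alpha(j)$), recognizes each $J_x$ as the ``anti-incidence matrix'' of the flag complex $\link(x)$ from \cite{Scott}, and cites Theorem~7.13 there for both the column-sum formula (3) and the involution property (4). Your plan for (4)---verify $J\cdot[*]J[*]=I$ entrywise by reducing to instances of (3)---is plausible in spirit but not worked out; the paper's block reduction to a known result about a single link is considerably more direct.
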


\begin{proof}
The first two properties follow directly from the definitions.  For
the second two, we notice that the entries of the matrix $[*]J$ are
given by 
\[([*]J)_{i,j}=J_{i^*,j}=\left\{\begin{array}{ll}
-1 & \mbox{if $i=j\in V/G$}\\
(-1)^{|i|-1} & \mbox{if $i\in\cA, j\in V/G$ and $\alpha(i)=j$}\\
(-1)^{|i|-1} & \mbox{if $i,j\in\cA$, $\alpha(i)=\alpha(j)$, and
  $\star(\sigma(i))\cap\sigma(j)=\emptyset$}\\
0 & \mbox{otherwise.}\end{array}\right.\]
In particular, the $(i,j)$-entry is zero unless
$\alpha(i)=\alpha(j)$, so the matrix breaks into diagonal blocks
corresponding to the partition of $\cS=\coprod_x\cS_x$ where $\cS_x$ is the set
of diagonals with initial vertex $x$.  That is 
\[[*]J=\bigoplus_{x\in V/G} J_x\]
where $J_x$ is the $\cS_x\times\cS_x$ block of $[*]J$.  For each $x$, the
matrix $J_x$ is the ``anti-incidence matrix'' introduced in
Definition~7.1 of \cite{Scott} for the simplicial complex $\link(x)$.  By
Theorem~7.13 of \cite{Scott}, the columns of $J_x$ all have sum
$\bar{\chi}(\link(x))$.  By the block decomposition of $[*]J$, the
$j$th column of $[*]J$ has sum $\bar{\chi}(\link(\alpha(j)))$, proving (3). 
The second part of Theorem~7.13 of \cite{Scott} states that the
anti-incidence matrix for $\link(x)$ is an involution if $\link(x)$ is
an Eulerian sphere.  It follows that $J_x$, and hence $[*]J$ is an
involution, proving (4).
\end{proof}

\subsection{Proof of Reciprocity}
We now prove Theorem~\ref{thm:main}.  As above, we let $Q=Q_+$. By
definition, the reciprocal $\bar{\lambda}_{x,y}$ exists if the matrix
$I-\bar{Q}$ (obtained by 
replacing every entry $j\in\cA$ with $j^{-1}$) is invertible over
$\bbQ((\cA))$, in which case we have 
\begin{eqnarray}\label{eq:bl}
\bar{\lambda}_{x,y} &=& B_{\alpha}(I-\bar{Q})^{-1}E.
\end{eqnarray}
Since $D^{-1}=\bar{D}$, we can rewrite $I-\bar{Q}$ as:  
\begin{eqnarray*}
I-\bar{Q} &=& I-\bar{D}_0J\\
            &=& \bar{D}D-\bar{D}_0J\\
            &=& \bar{D}D_0-\bar{D}J,
\end{eqnarray*}
where the last equation follows from the fact that $\bar{D}D$ is
obtained from $\bar{D}D_0$ by putting $1$s in entries $i,j$ whenever
$i=j\in V/G$, and $\bar{D}J$ is obtained from $\bar{D}_0J$ by putting
$-1$s in these same entries.  Now using algebra and (1), (2), and (4) of
Lemma~\ref{lem:main}, we have 
\begin{eqnarray*}
I-\bar{Q}   &=& -\bar{D}(I-D_0J^{-1})J\\
            &=& -\bar{D}(I-D_0[*]J[*])J\\
            &=& -\bar{D}(I-[*]D^*_0J[*])J\\
            &=& -\bar{D}(I-[*]Q^*[*])J.
\end{eqnarray*}
By Proposition~\ref{prop:pmQ}, this can be rewritten as 
\[I-\bar{Q} = -\bar{D}(I-Q_-^*)J,\]
which is invertible with inverse given by 
\[(I-\bar{Q})^{-1} = -[*]J[*](I-Q_-^*)^{-1}D.\]
Substituting into (\ref{eq:bl}), we have  
\begin{eqnarray}\label{eq:euler}                    
\bar{\lambda}_{x,y} &=& -B_{\alpha}[*]J[*](I-Q_-^*)^{-1}DE.
\end{eqnarray}
Since $D$ is diagonal with $(y,y)$-entry equal to $1$, we have $DE=E$.  
The vector $B_{\alpha}$ only has nonzero entries (which are all $1$s)
for those states $j$ whose initial vertex is $x$, hence by the block
decomposition of $[*]J$ and part (3) of Lemma~\ref{lem:main}, we have 
$B_{\alpha}[*]J=\bar{\chi}(\link(x))B_{\alpha}$.  Substituting into
(\ref{eq:euler}), and using the fact that $B_{\alpha}[*]=B_{\omega}$, gives   
\begin{eqnarray*}
\bar{\lambda}_{x,y} &=& -\bar{\chi}(\link(x))B_{\omega}(I-Q_-^*)^{-1}E.
\end{eqnarray*}
Since the vertex links of $X/G$ are Eulerian spheres of
dimension $n-1$, this simplifies to 
\begin{eqnarray*}
\bar{\lambda}_{x,y} &=& (-1)^nB_{\omega}(I-Q_-^*)^{-1}E.\\
\end{eqnarray*}
Finally, using the rational expression for $\lambda_{x,y}$ in
(\ref{eq:lambda-}) and equation (\ref{eq:lambda*}), we then have 
\[\bar{\lambda}_{x,y}=(-1)^n\lambda_{x,y}^*.\]
This completes the proof.

\subsection{Generalization to orbihedra}\label{ss:orb}

One can weaken the assumption slightly on the $G$-action and assume
only that the action on the vertex set $V$ is free.  Since one then still
has unique lifts from vertex links in $X/G$ to vertex links in 
$X$, all of the above arguments go through verbatim.  In particular,
the main result of this paper then includes the case proved by the author in
\cite{Scott} where $G$ is a right-angled Coxeter group and $X$ is the
corresponding Davis complex.  In this case, there is a single free
vertex orbit, but every (nonoriented) diagonal in $X$ has an order-$2$
stabilizer that reverses its direction. It follows that the 
involution $*$ on the state set $\cS$ is trivial (so $J$ is
its own inverse) and the action of $\ast$ on $\bbQ((\bt))$ is trivial.
The reciprocity formula therefore takes the form 
\[\bar{\lambda}_{x,y}=(-1)^n\lambda_{x,y}.\]   

Theorem~\ref{thm:main} also applies more generally to the
right-angled {\em mock reflection groups} studied in
\cite{DJS2,Scott2}.  By definition, $G$ is a right-angled mock
reflection group if it acts on a CAT($0$) cube complex $X$ with the
property that the action is simply-transitive on the vertex set and
the edge stabilizers are all nontrivial.  (Any right-angled Coxeter group
acting on its Davis complex is a special case.)  Again one has a
single free vertex orbit and for each edge, an involution in the group
that reverses its orientation.  It follows that the involution
$*:\cS\rightarrow\cS$ on the state set is trivial on the $0$ and
$1$-dimensional diagonals in $X/G$.  However, for mock reflection
groups, the hyperplane dual to 
an edge need not be stabilized pointwise by the edge stabilizer, so
the involution $*$ need not be trivial on higher dimensional
diagonals.  This time, the reciprocity formula for the characteristic
series requires the more general form
$\bar{\lambda}_{x,y}=(-1)^n\lambda_{x,y}^{\ast}$.
However, since $\lambda_{x,y}$ and $\lambda_{x,y}^*$ specialize to the
same commutative-variable series, one still obtains 
\[\cG_{x,y}(\bt^{-1})=(-1)^n\cG_{x,y}(\bt).\]

In light of this generalization to orbihedra, our theorem applies to
finite index subgroups of right-angled Coxeter groups and, more
generally, to finite index subgroups of right-angled mock reflection
groups. 

\begin{corollary}
Let $W$ be a right-angled Coxeter group or mock reflection group
acting on its corresponding CAT($0$) cube complex $X$, and assume that
$X$ is Eulerian of dimension $n$. If $G$ is any finite index subgroup
of $W$, and $x$ is the $G$-orbit of any vertex, then the growth series
of $G$ with respect to $x$ satisfies     
\[G_x(\bt^{-1})=(-1)^n G_x(\bt).\]
\end{corollary}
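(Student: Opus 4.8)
The plan is to reduce the Corollary directly to the orbihedral version of Theorem~\ref{thm:main} discussed in Section~\ref{ss:orb}, in which the only hypothesis imposed on the $G$-action is that it be free on the vertex set $V$; the bulk of the argument is simply verifying that a finite-index subgroup $G\le W$ meets this (and the remaining standard) hypotheses.

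First I would recall that in both cases covered by the statement --- $W$ a right-angled Coxeter group acting on its Davis complex, or $W$ a right-angled mock reflection group acting on $X$ --- the group $W$ acts simply transitively on the vertex set $V$. Consequently any subgroup $G\le W$ acts freely on $V$, with exactly $[W:G]$ vertex orbits. Since $[W:G]$ is finite and $W$ acts cellularly and cocompactly on $X$, the restricted $G$-action is again cellular and cocompact. Thus $G$ satisfies precisely the hypotheses of the orbihedral generalization: it acts cellularly and cocompactly on $X$ and freely on $V$.

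Next I would check the Eulerian hypothesis in the form that the proof of Theorem~\ref{thm:main} actually uses, namely that every vertex link of the quotient is an Eulerian sphere of dimension $n-1$. Because $G$ acts freely on $V$, for each vertex $v$ of $X$ the projection restricts to an isomorphism $\link(v)\cong\link(\bar v)$ --- this is exactly the ``unique lifts from vertex links'' observation invoked in Section~\ref{ss:orb}. Since $X$ is assumed Eulerian of dimension $n$, each $\link(v)$ is an Eulerian $(n-1)$-sphere, hence so is each vertex link of $X/G$; this is all that Lemma~\ref{lem:main}(3)--(4) and the reciprocity computation require. Applying the orbihedral version of Theorem~\ref{thm:main} therefore gives $\bar{\lambda}_{x,x}=(-1)^n\lambda_{x,x}^{\ast}$, and specializing the noncommuting variables to the commuting indeterminates $\bt$ (Section~\ref{ss:spec}) sends both $\lambda_{x,x}$ and $\lambda_{x,x}^{\ast}$ to $G_x(\bt)=\cG_{x,x}(\bt)$ while sending $\bar{\lambda}_{x,x}$ to $G_x(\bt^{-1})$, yielding $G_x(\bt^{-1})=(-1)^nG_x(\bt)$.

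I expect the only delicate point to be the treatment of stabilizers on higher-dimensional cubes: for mock reflection groups the involution $\ast$ on the state set $\cS$ need not be trivial and $X/G$ need not be an honest cube complex, so one cannot literally invoke Theorem~\ref{thm:main} as stated. But, exactly as in the closing discussion of Section~\ref{ss:orb}, $\lambda_{x,x}$ and $\lambda_{x,x}^{\ast}$ have the same commutative specialization, so passing through $\ast$ costs nothing in the end and the stated multivariate formula still follows.
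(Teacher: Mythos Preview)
Your proposal is correct and follows the same route as the paper: both deduce the corollary directly from the orbihedral generalization of Theorem~\ref{thm:main} in Section~\ref{ss:orb}, using that $W$ acts simply transitively on $V$ so any finite-index $G\le W$ acts freely on $V$ and cocompactly on $X$. In fact you supply more detail than the paper does --- the paper simply asserts the corollary as an immediate consequence of the orbihedral discussion without writing out the verification of the hypotheses.
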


\bibliographystyle{amsplain}
\bibliography{CCR}

\providecommand{\bysame}{\leavevmode\hbox to3em{\hrulefill}\thinspace}
\providecommand{\MR}{\relax\ifhmode\unskip\space\fi MR }
% \MRhref is called by the amsart/book/proc definition of \MR.
\providecommand{\MRhref}[2]{%
  \href{http://www.ams.org/mathscinet-getitem?mr=#1}{#2}
}
\providecommand{\href}[2]{#2}
\begin{thebibliography}{10}

\bibitem{CD}
Ruth Charney and Michael Davis, \emph{Reciprocity of growth functions of
  {C}oxeter groups}, Geom. Dedicata \textbf{39} (1991), no.~3, 373--378.
  \MR{1123152 (92h:20067)}

\bibitem{DJS2}
M.~Davis, T.~Januszkiewicz, and R.~Scott, \emph{Fundamental groups of
  blow-ups}, Adv. Math. \textbf{177} (2003), no.~1, 115--179. \MR{1985196
  (2004c:57030)}

\bibitem{DDJO}
Michael~W. Davis, Jan Dymara, Tadeusz Januszkiewicz, and Boris Okun,
  \emph{Weighted {$L^2$}-cohomology of {C}oxeter groups}, Geom. Topol.
  \textbf{11} (2007), 47--138. \MR{2287919 (2008g:20084)}

\bibitem{Dym}
Jan Dymara, \emph{Thin buildings}, Geom. Topol. \textbf{10} (2006), 667--694.
  \MR{2240901 (2007h:20027)}

\bibitem{epstein}
David B.~A. Epstein, James~W. Cannon, Derek~F. Holt, Silvio V.~F. Levy,
  Michael~S. Paterson, and William~P. Thurston, \emph{Word processing in
  groups}, Jones and Bartlett Publishers, Boston, MA, 1992. \MR{1161694
  (93i:20036)}

\bibitem{FP}
William~J. Floyd and Steven~P. Plotnick, \emph{Growth functions on {F}uchsian
  groups and the {E}uler characteristic}, Invent. Math. \textbf{88} (1987),
  no.~1, 1--29. \MR{877003 (88m:22023)}

\bibitem{NR}
G.~A. Niblo and L.~D. Reeves, \emph{The geometry of cube complexes and the
  complexity of their fundamental groups}, Topology \textbf{37} (1998), no.~3,
  621--633. \MR{1604899 (99a:20037)}

\bibitem{OS}
Boris Okun and Richard Scott, \emph{{$L^2$}-homology and reciprocity for
  right-angled {C}oxeter groups}, Fund. Math. \textbf{214} (2011), no.~1,
  27--56. \MR{2845632}

\bibitem{Sageev}
Michah Sageev, \emph{Ends of group pairs and non-positively curved cube
  complexes}, Proc. London Math. Soc. (3) \textbf{71} (1995), no.~3, 585--617.
  \MR{1347406 (97a:20062)}

\bibitem{SS}
Arto Salomaa and Matti Soittola, \emph{Automata-theoretic aspects of formal
  power series}, Springer-Verlag, New York, 1978, Texts and Monographs in
  Computer Science. \MR{0483721 (58 \#3698)}

\bibitem{Scott2}
Richard Scott, \emph{Right-angled mock reflection and mock {A}rtin groups},
  Trans. Amer. Math. Soc. \textbf{360} (2008), no.~8, 4189--4210. \MR{2395169
  (2009b:20071)}

\bibitem{Scott}
\bysame, \emph{Rationality and reciprocity for the greedy normal form of a
  {C}oxeter group}, Trans. Amer. Math. Soc. \textbf{363} (2011), no.~1,
  385--415. \MR{2719687 (2011m:20091)}

\bibitem{Serre}
Jean-Pierre Serre, \emph{Cohomologie des groupes discrets}, Prospects in
  mathematics ({P}roc. {S}ympos., {P}rinceton {U}niv., {P}rinceton, {N}.{J}.,
  1970), Princeton Univ. Press, Princeton, N.J., 1971, pp.~77--169. Ann. of
  Math. Studies, No. 70. \MR{0385006 (52 \#5876)}

\end{thebibliography}

\end{document}